\newtheorem{theorem}{Theorem}[section]
\newtheorem{lemma}[theorem]{Lemma}
\newtheorem{sublemma}[theorem]{Sublemma}
\newtheorem{corollary}[theorem]{Corollary}
\theoremstyle{definition}
\newtheorem{definition}[theorem]{Definition} 
\newtheorem{example}[theorem]{Example}
\theoremstyle{remark}
\newtheorem{remark}[theorem]{Remark}
\numberwithin{equation}{section}
\renewcommand\bigskip{\medskip}
\def\to{\rightarrow}
\def\N{\mathbb N}
\def\Q{\mathbb Q}
\def\R{\mathbb R}
\def\vep{\varepsilon}
\DeclareMathOperator{\dist}{dist}
\newcommand{\field}[1]{\mathbb{#1}}            		   
\newcommand{\mc}[1]{\mathcal{#1}}
\newcommand{\Pb}{\field{P}}
\DeclareMathOperator{\spt}{\mathrm{spt}}
\DeclareMathOperator{\capa}{\mathrm{Cap}}
\DeclareMathOperator{\dimh}{\mathrm{dim}_{\mathrm{H}}}
\DeclareMathOperator{\dimhu}{\overline{\mathrm{dim}}_{\mathrm{H}}}
\DeclareMathOperator{\dimhl}{\underline{\mathrm{dim}}_{\mathrm{H}}}
\DeclareMathOperator{\dimloc}{\mathrm{dim}_{\mathrm{loc}}}
\DeclareMathOperator{\dimlocu}{\overline{\mathrm{dim}}_{\mathrm{loc}}}
\DeclareMathOperator{\dimlocl}{\underline{\mathrm{dim}}_{\mathrm{loc}}}
\DeclareMathOperator*{\esssup}{ess\,sup}
\DeclareMathOperator*{\essinf}{ess\,inf}
\begin{document}

\title[Ekstr\"om-Persson conjecture]{Ekstr\"om-Persson conjecture regarding
  random covering sets}

\author[E. J\"arvenp\"a\"a]{Esa J\"arvenp\"a\"a}
\address{Department of Mathematical Sciences, P.O. Box 3000, 90014 
University of Oulu, Finland}
\email{esa.Jarvenpaa@oulu.fi}

\author[M. J\"arvenp\"a\"a]{Maarit J\"arvenp\"a\"a}
\address{Department of Mathematical Sciences, P.O. Box 3000, 90014 
University of Oulu, Finland}
\email{maarit.Jarvenpaa@oulu.fi}

\author[M. Myllyoja]{Markus Myllyoja}
\address{Department of Mathematical Sciences, P.O. Box 3000, 90014 
University of Oulu, Finland}
\email{markus.myllyoja@oulu.fi}

\author[\"O. Stenflo]{\"Orjan Stenflo}
\address{Department of Mathematics, Uppsala University, P.O. Box 480, 75106
Uppsala, Sweden}
\email{stenflo@math.uu.se}

\thanks{The third author is supported by Emil Aaltonen Foundation.
 We thank Sylvester Eriksson-Bique and Ruxi Shi for interesting discussions.}

\subjclass[2020]{28A80, 60D05}
\keywords{Random covering set, Hausdorff dimension, general measure, balls}

\begin{abstract}
We consider the Hausdorff dimension of random covering sets generated by balls
and general measures in Euclidean spaces. We prove, for a certain parameter
range, a conjecture by Ekstr\"om and Persson concerning the exact value
of the dimension in the special case of radii $(n^{-\alpha})_{n=1}^\infty$. For
generating balls with an arbitrary sequence of radii, we
find sharp bounds for the dimension and show that the natural extension of the
Ekstr\"om-Persson conjecture is not true in this case. Finally, we construct
examples demonstrating that there does not exist a dimension formula involving
only the lower and upper local dimensions of the measure and a critical
parameter determined by the sequence of radii.
\end{abstract}

\maketitle

\section{Introduction}\label{intro}

The limsup set $E(A_n)$ of a sequence $(A_n)_{n=1}^\infty$ of subsets
of some space $X$ consists of those points of $X$ which belongs to infinitely
many of the sets $A_n$, that is,
\[
E(A_n):=\limsup_{n\to\infty}A_n=\bigcap_{n=1}^\infty\bigcup_{k=n}^\infty A_k.
\]
A random covering set is a limsup set where the selection of the sets $A_n$
involves some randomness. For the exact setup of this paper, see
Definition~\ref{definition:randomcoveringset}. The study of random covering
sets  may be traced back to
a paper of Borel \cite{Bor1897} in the late 1890's, where he stated that a
given point of a circle belongs almost surely to infinitely many randomly placed
arcs provided the sum of their lengths is infinite. That article is the origin
of a theorem nowadays known as the Borel-Cantelli lemma. The dimensional
properties of related limsup sets are implicitly studied by Besicovitch
\cite{Bes1935} and Eggleston \cite{Egg1949} in the connection of
Besicovitch-Eggleston sets concerning $k$-adic expansions of real numbers. This
kind of limsup sets appear also in Diophantine approximation as demonstrated by
the classical theorems of Khintchine \cite{Khi1924} and Jarn\'{i}k
\cite{Jar1929}.

In this paper, we concentrate on the Hausdorff dimension of random covering
sets. For the closely related topic of shrinking target problem, we refer to
\cite{Dav2022a,Dav2022b,HV1995,HV1999,LLVZ2023,SW2013,KKP2020,KKP2021}.
The study of Hausdorff dimension of random covering sets was initiated by Fan
and Wu in \cite{FW2004}. They proved that the Hausdorff dimension of the set of
points on the unit circle covered by infinitely many randomly placed arcs is
almost surely equal to $\frac 1{\alpha}$ provided the lengths of the arcs is
given by the sequence $(n^{-\alpha})_{n=1}^\infty$. Their method can be extended to
a general sequence of lengths $\underline l=(l_n)_{n=1}^\infty$ giving that the
Hausdorff dimension of the random covering set equals almost surely
\begin{equation}\label{s2}
s_2(\underline l):=\inf\{s>0\mid\sum_{n=1}^\infty l_n^s<\infty\}
  =\sup\{s>0\mid\sum_{n=1}^\infty l_n^s=\infty\}
\end{equation}
provided this number is at most 1.  
This result was also proved by Durand \cite{Dur2010} using a different method.
The mass transference principle, also known as the ubiquity theorem
(see \cite{BV2006,Dav2022b,E-B2022,Jaf2000,KR2021}), can be applied to
extend the above result to randomly placed balls on the $d$-dimensional torus,
see \cite{JJKLS2014}. In that paper, the almost sure dimension formula for
randomly placed
rectangles on $d$-dimensional torus is derived under a monotonicity assumption
on side lengths of the rectangles. The formula is as in \eqref{s2}
with $l_n$ replaced by the singular value function of a rectangle. Persson
\cite{Per2015} proved an almost sure lower bound for randomly placed open sets.
For rectangles, this lower bound equals the value obtained in \cite{JJKLS2014},
proving that the monotonicity assumption made in that paper is not needed.
Finally, in \cite{FJJS2018} Feng et al. derived an almost sure dimension formula
for general Lebesgue measurable generating sets, which completed the study in
the case where the randomness is determined by a measure having an absolutely
continuous component with respect to the Lebesgue measure.

There are  natural ways to continue the study of random covering sets:
one may replace the underlying space, which is a Riemann manifold in
\cite{FJJS2018}, by a metric space, one may consider singular generating
measures or one may study uniform coverings. The first line has been pursued in
\cite{JJKLSX2017,EJJS2018,EJJ2020,HL2021,Myl2021,E-B2022}. For uniform
coverings, see \cite{KLP2023}. For singular generating measures, the dimension
formula will depend both on the generating sets and the structure of the
generating measure. Seuret \cite{Seu2018} studied the case of balls with radii
$(n^{-\alpha})_{n=1}^\infty$ distributed according to a
Gibbs measure on the symbolic space and proved that the dimension formula
depends on $\alpha$ and the multifractal spectrum of the Gibbs measure. Fan et
al. \cite{FST2013} and Liao and Seuret \cite{LS2013} proved a similar result
for balls with radii $(n^{-\alpha})_{n=1}^\infty$ distributed along an orbit of an
expanding Markov map on the unit circle.

In \cite{EP2018}, Ekstr\"om and Persson studied balls with radii
$(n^{-\alpha})_{n=1}^\infty$ placed randomly according to a general measure on
$\mathbb R^d$. They proved that the Hausdorff dimension of the random covering
set is almost surely bounded from below by $\frac 1{\alpha}-\delta$
provided the upper Hausdorff dimension of the measure is larger than
$\frac 1{\alpha}$. Here $\delta$ is the essential infimum of the differences
of the upper and lower local dimensions in certain range, see
Theorem~\ref{EPthm}. For general $\alpha$, they gave almost sure
lower and upper bounds for the dimension depending on
the fine and coarse multifractal spectra of the measure and stated a
conjecture that the almost sure
dimension is equal to the value of the increasing 1-Lipschitz hull of the
fine multifractal spectrum at the point $\frac 1{\alpha}$. In particular, in the
case of Theorem~\ref{EPthm}, this value is equal to $\frac 1{\alpha}$.

In this paper, we prove that the Ekström-Persson conjecture is true in the
range where $\frac 1{\alpha}$ is at most the upper Hausdorff dimension of the
measure (Theorem~\ref{theorem:nminusalpha}). We also derive a lower bound for
the almost sure dimension which improves the one in Theorem~\ref{EPthm} and is
valid for all sequences of radii
(Theorem~\ref{theorem:lowerboundforgeneralballs}). We construct an example
showing that the bounds in Theorem~\ref{theorem:lowerboundforgeneralballs} are
sharp and that the
natural extension of the Ekström-Persson conjecture is not true for general
sequences of radii (Example~\ref{example:lowerboundmaybesharp}).
Example~\ref{example:dimensionalwayss_2} demonstrates that the
dimension may be independent of the difference between the lower and upper local
dimensions. Finally,
combining Examples~\ref{example:dimensionalwayss_2} and
\ref{example:lowerboundmaybesharp}, we conclude that there is no dimension
formula for general sequences of radii $\underline r$ which depends only on
$s_2(\underline r)$ and the lower and upper local dimensions of the generating
measure.

The paper is organised as follows. In Section~\ref{notation}, we
introduce some notation and state our main results. In Section~\ref{lemmas}, we
state some preliminary lemmas needed in later sections.
Section~\ref{section:proofofthmlowerboundenergy} is devoted to the proof of our
main technical tool (Theorem~\ref{theorem:generallowerboundenergy})
from which our main theorems (Theorem~\ref{theorem:nminusalpha} and
Theorem~\ref{theorem:lowerboundforgeneralballs})
follow. In Section~\ref{proofofEP}, we prove
Theorem~\ref{theorem:nminusalpha} and, in Section~\ref{proofofgeneral}, we prove
Theorem~\ref{theorem:lowerboundforgeneralballs}. Finally, in
Section~\ref{examples}, we construct examples
(Example~\ref{example:dimensionalwayss_2} and
Example~\ref{example:lowerboundmaybesharp}) showing the sharpness of our
results.

\section{Notation and Results}\label{notation}

We denote by $\mathcal{P}(\R^d)$ the space of compactly supported Borel
probability measures on $\R^d$ and denote by $\spt\mu$ the support of a measure
$\mu \in \mc{P}(\R^d)$. We write $B(x,r)$ for the open ball with centre
$x\in \R^d$ and radius $r>0$. We note that our results remain unchanged if all
the open balls are replaced by closed balls. We will use the notation
$\underline{r}=(r_k)_{k=1}^{\infty}$ for sequences of positive numbers.

For a measure $\mu \in \mathcal{P}(\R^d)$, the lower and upper local dimensions
of $\mu$ at a point $x$ are defined by
\[
\dimlocl \mu(x):=\liminf_{r\to 0}\frac{\log \mu(B(x,r))}{\log r}
\,\text{ and }\,
\dimlocu \mu(x):=\limsup_{r\to 0}\frac{\log \mu(B(x,r))}{\log r}.
\]
If these notions agree, we write $\dimloc \mu(x)$ for the common value. The
lower and upper Hausdorff dimensions of $\mu$ are defined by 
\[
\dimhl \mu:=\essinf_{x\sim \mu}\dimlocl \mu(x)
\,\text{ and }\,
\dimhu \mu:=\esssup_{x\sim \mu}\dimlocl \mu(x).
\]
Again, if these notions agree, we denote the common value by $\dimh \mu$.

\begin{definition}\label{definition:randomcoveringset}
Let $\mu\in\mathcal{P}(\R^d)$ and consider the probability space
$(\Omega,\mathbb{P})$, where $\Omega:=\spt\mu^{\N}$ and $\mathbb{P}:=\mu^{\N}$.
Let $\underline{r}=(r_k)_{k=1}^{\infty}$ be a sequence of positive numbers. Given
$\omega \in \Omega$, the \emph{random covering set} generated by the sequence
$\underline{r}$ is the limsup set
\[
E_{\underline{r}}(\omega)\coloneqq\limsup_{k\to\infty}B(\omega_{k},r_k)
  =\bigcap_{k=1}^{\infty}\bigcup_{n=k}^{\infty}B(\omega_n,r_n).
\]
In the special case $r_k=k^{-\alpha}$ for all $k\in \N$ and for some $\alpha>0$,
we write $E_{\alpha}(\omega)$ for the corresponding limsup set, that
is,
\[
E_{\alpha}(\omega):=\limsup_{k\to\infty}B(\omega_{k},k^{-\alpha}).
\]
\end{definition}

By Kolmogorov's zero-one law, the quantity
$\dim_{\mathrm{H}}E_{\underline{r}}(\omega)$ is constant $\mathbb{P}$-almost surely
since $\{\omega\in\Omega\mid\dim_{\mathrm{H}}E_{\underline{r}}(\omega)\le\beta\}$ is
a tail event for all $\beta\ge 0$ (see e.g. \cite[Lemma 3.1]{JJKLSX2017}). Let
$f_{\mu}(\underline{r})$ denote this almost sure value of
$\dim_{\mathrm{H}}E_{\underline{r}}(\omega)$. We also write $f_{\mu}(\alpha)$ for the
almost sure value of $\dimh E_{\alpha}(\omega)$. For a sequence $\underline{r}$
of positive numbers, let 
\[
s_1(\underline{r}):=\liminf_{k\to \infty}\frac{\log k}{-\log r_k}
\,\text{ and }\,
s_3(\underline{r}):=\limsup_{k\to \infty}\frac{\log k}{-\log r_k}.
\]
We always have the inequalities 
\[
s_1(\underline{r})\leq s_2(\underline{r})\leq s_3(\underline{r}),
\]
where $s_2(\underline{r})$ is as in (\ref{s2}). Furthermore, if $\underline{r}$
is decreasing, then $s_2(\underline{r})= s_3(\underline{r})$. This
follows from the well-known fact (due to Abel) that $\sum_{n=1}^\infty a_n<\infty$
implies $\lim_{n\to\infty}na_n=0$ if $(a_n)_{n=1}^\infty$ is decreasing.
Since the centres $\omega_k$ are independent identically distributed random
variables, the quantity $f_{\mu}(\underline{r})$ remains unchanged if the
sequence $\underline{r}$ is reordered. Thus, we may always assume that
$\underline{r}$ is decreasing. The quantity $s_2(\underline{r})$ is an upper
bound for $\dimh E_{\underline{r}}(\omega)$ for any realisation $\omega$. This can
easily  be seen by observing that 
\[
E_{\underline{r}}(\omega)\subseteq \bigcup_{k=N}^{\infty}B(\omega_k,r_k)
\]
for every $N\in \N$. In the special case where
$\underline{r}=(k^{-\alpha})_{n=1}^\infty$, we have that 
\begin{equation}\label{equation:s_1=s_3fornminusalpha}
    s_1(\underline{r})=s_3(\underline{r})=\frac{1}{\alpha}
\end{equation}
and, thus, $\frac{1}{\alpha}$ is always an upper bound for $f_{\mu}(\alpha)$. 
The almost sure dimension $f_{\mu}(\alpha)$ has been studied by Ekström and
Persson in \cite[Theorem 2.1]{EP2018}. They proved the following
result.

\begin{theorem}\label{EPthm}
Let $\mu\in \mc{P}(\R^d)$ and $\alpha>0$. If $\frac{1}{\alpha}<\dimhu\mu$,
then
\[
f_{\mu}(\alpha)\geq \frac{1}{\alpha}-\delta,
\]
where
\[
\delta:=\essinf_{\substack{x\sim\mu,\\ \dimlocl \mu(x)>1/\alpha}}
  \left( \dimlocu \mu(x)-\dimlocl \mu(x) \right).
\]
\end{theorem}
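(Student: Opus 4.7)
The plan is to prove this lower bound by combining the second Borel--Cantelli lemma, applied to ``enlarged'' random balls, with the mass transference principle. The idea is to inflate the radii from $k^{-\alpha}$ to $k^{-\alpha'}$ for some $\alpha' < \alpha$ chosen just small enough that the enlarged random covering set almost surely covers $\mu$-typical points of a set $K \subseteq \spt\mu$ of controlled Hausdorff dimension; the covering statement can then be transferred back to the original radii via the mass transference principle at the cost of the factor $\alpha'/\alpha$.

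Step 1 (uniformisation). Fix $\epsilon > 0$. By the hypothesis $\dimhu\mu > 1/\alpha$ the set $\{x : \dimlocl\mu(x) > 1/\alpha\}$ has positive $\mu$-measure, and by the definition of $\delta$ so does its subset $\{\dimlocu\mu - \dimlocl\mu \leq \delta + \epsilon\}$; decomposing this latter set according to the value of $\dimlocl\mu$ yields some $s_0 > 1/\alpha$ and a Borel set $A$ with $\mu(A) > 0$ on which $\dimlocl\mu(x) \geq s_0$ and $\dimlocu\mu(x) \leq s_0 + \delta + 2\epsilon$. Egorov's theorem produces a compact subset $K \subseteq A$ with $\mu(K) > 0$ and a scale $r_0$ on which $\mu(B(x,r)) \leq r^{s_0 - \epsilon}$ and $\mu(B(x,r)) \geq r^{s_0 + \delta + 3\epsilon}$ hold uniformly for all $x \in K$ and $r \leq r_0$. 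For any fixed $x \in K$ and any $\alpha' < 1/(s_0 + \delta + 3\epsilon)$, the events $\{x \in B(\omega_k, k^{-\alpha'})\}$ are mutually independent with probabilities bounded below by $k^{-\alpha'(s_0 + \delta + 3\epsilon)}$ for large $k$, whose sum diverges. The second Borel--Cantelli lemma and Fubini then show that almost surely the random set $K^\omega := K \cap \limsup_{k} B(\omega_k, k^{-\alpha'})$ has full $\mu|_K$-measure; since $\mu|_{K^\omega}$ inherits the Frostman bound of exponent $s_0 - \epsilon$ from $K$, the mass distribution principle yields $\mathcal{H}^{s_0 - \epsilon}(K^\omega) > 0$ almost surely.

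Step 2 (mass transference). The mass transference principle of Beresnevich and Velani~\cite{BV2006}, applied sample-pathwise along the lines of~\cite{JJKLS2014}, now converts the covering of $K^\omega$ by balls of radii $k^{-\alpha'}$ into a dimension bound for the original radii $k^{-\alpha} = (k^{-\alpha'})^{\alpha/\alpha'}$: almost surely
\[
\dimh\limsup_{k} B(\omega_k, k^{-\alpha}) \geq (s_0 - \epsilon)\,\frac{\alpha'}{\alpha}.
\]
Letting $\alpha' \uparrow 1/(s_0 + \delta + 3\epsilon)$, then $\epsilon \downarrow 0$, and finally $s_0 \downarrow 1/\alpha$ gives $f_\mu(\alpha) \geq 1/(\alpha(1 + \alpha\delta))$, which via the elementary inequality $1/(1+x) \geq 1-x$ for $x \geq 0$ is at least $1/\alpha - \delta$, as required.

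The main obstacle is justifying the mass transference principle in this partly random, measure-theoretic setting: the classical MTP is formulated for deterministic balls whose limsup has full Lebesgue measure, whereas here one only knows that a random subset of positive $\mathcal{H}^{s_0 - \epsilon}$-measure is covered. One must therefore either invoke the generalised form of MTP (permitting arbitrary gauge functions and $\mathcal{H}^s$-positive covered sets) and apply it sample-pathwise, or establish a suitable adaptation directly, taking some care over the measurability of $K^\omega$. A secondary subtlety is that the essinf defining $\delta$ is taken only over points with $\dimlocl\mu(x) > 1/\alpha$, which forces the choice $s_0 > 1/\alpha$ and is precisely where the hypothesis $\dimhu\mu > 1/\alpha$ enters the argument beyond merely ensuring that $\delta$ is defined on a nonvacuous set.
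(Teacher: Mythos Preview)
The paper does not itself prove Theorem~\ref{EPthm}; it is quoted from \cite{EP2018}. What the paper does prove is the strictly stronger Theorem~\ref{theorem:nminusalpha} (namely $f_\mu(\alpha)=1/\alpha$ whenever $1/\alpha\le\dimhu\mu$), and the method there is completely different from yours: it is potential-theoretic, building random measures supported on $E_\alpha(\omega)$ with finite $t$-energy via Theorem~\ref{theorem:generallowerboundenergy} and Corollary~\ref{corollary:generallowerboundmeasure}, with no use of the mass transference principle at all.

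Your inflate--cover--deflate strategy is natural, but the step you yourself flag as ``the main obstacle'' is a genuine gap rather than a technicality. The Beresnevich--Velani MTP requires the limsup of the inflated balls to have \emph{full} $\mathcal H^s$-measure in every ball, not merely to contain a set of positive $\mathcal H^{s_0-\epsilon}$-measure. You have shown that $K^\omega$ carries full $\mu|_K$-mass; since $\mu|_K$ is $(s_0-\epsilon)$-Frostman this gives $\mathcal H^{s_0-\epsilon}(K^\omega)>0$, but it does \emph{not} give $\mathcal H^{s_0-\epsilon}(K\setminus K^\omega)=0$, and there is no off-the-shelf MTP from ``positive Hausdorff measure'' alone. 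The sample-pathwise application in \cite{JJKLS2014} works precisely because the uniform distribution there makes the inflated limsup equal to the whole torus almost surely --- a feature that disappears for singular $\mu$. Closing this gap would require an MTP-type statement relative to a Frostman (not Ahlfors regular) measure, which is substantial additional work; the present paper sidesteps the issue entirely by using energies, and in doing so removes the loss $\delta$ altogether.

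A smaller point: your final step ``$s_0\downarrow 1/\alpha$'' treats $s_0$ as a free parameter, but it is dictated by where $\mu$ places mass with nearly minimal $\dimlocu-\dimlocl$, so you cannot choose it. Fortunately this does no damage: since $s_0\mapsto s_0/(s_0+\delta)$ is nondecreasing, any admissible $s_0>1/\alpha$ already yields at least $1/(\alpha(1+\alpha\delta))\ge 1/\alpha-\delta$.
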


Ekström and Persson also obtained lower and upper bounds for $f_{\mu}(\alpha)$
when $\frac{1}{\alpha}>\dimhu \mu$, and they conjectured that the equality
\begin{equation}\label{conjecture1}
    f_{\mu}(\alpha)=\overline{F}_{\mu}\Bigl(\frac{1}{\alpha}\Bigr)
\end{equation}
always holds, where $F_{\mu}$ is the fine multifractal spectrum defined by
\[
F_{\mu}(s):=\dimh \{ x\in \spt \mu\mid \dimlocl \mu(x)\leq s\}
\]
and $\overline{F}_{\mu}$ denotes the increasing $1$-Lipschitz hull of $F_{\mu}$,
that is,
\[
\overline{F}_{\mu}(s):=\inf \{ h(s)\mid h\geq F_{\mu} \text{ is increasing and }
1\text{-Lipschitz continuous} \}.
\]
Our first main result is the following theorem, which verifies
\eqref{conjecture1} in the case where $\frac{1}{\alpha}\leq \dimhu \mu$.

\begin{theorem}\label{theorem:nminusalpha}
Let $\mu\in \mc{P}(\R^d)$ and $\alpha>0$. If $\frac{1}{\alpha}\le\dimhu\mu$,
then 
\[
f_{\mu}(\alpha)=\frac{1}{\alpha}.
\]
\end{theorem}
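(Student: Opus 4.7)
The upper bound $f_\mu(\alpha)\le 1/\alpha$ has already been recorded in the discussion preceding the theorem: the inclusion $E_\alpha(\omega)\subseteq\bigcup_{k\ge N}B(\omega_k,k^{-\alpha})$ for every $N$ combined with \eqref{equation:s_1=s_3fornminusalpha} gives $\dim_{\mathrm H}E_\alpha(\omega)\le s_2((k^{-\alpha}))=1/\alpha$ surely. So the work is in the matching lower bound $f_\mu(\alpha)\ge 1/\alpha$, and my plan is to deduce it from the paper's main technical tool, Theorem~\ref{theorem:generallowerboundenergy}, after passing to an auxiliary sub-measure of $\mu$. Theorem~\ref{EPthm} alone is not enough, because its multifractal gap $\delta$ can be strictly positive even under the hypothesis $1/\alpha\le\dimhu\mu$, and the purpose of the reduction is precisely to arrange that this gap disappears in the limit.

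Fix $\varepsilon>0$. Since $1/\alpha\le\dimhu\mu=\esssup_{x\sim\mu}\dimlocl\mu(x)$, the set $\{x\in\spt\mu:\dimlocl\mu(x)>1/\alpha-\varepsilon\}$ has positive $\mu$-measure, and an Egorov argument applied to the $\liminf$ defining $\dimlocl\mu$ produces a compact set $K$ with $\mu(K)>0$ together with some $r_0>0$ such that $\mu(B(x,r))\le r^{1/\alpha-2\varepsilon}$ whenever $x\in K$ and $r\in(0,r_0]$. Setting $\nu:=\mu|_K/\mu(K)$ and applying a standard doubling argument yields the uniform Frostman estimate $\nu(B(x,r))\lesssim r^{1/\alpha-2\varepsilon}$ for every $x\in\mathbb R^d$ and every sufficiently small $r$. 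To transfer dimension bounds from $\nu$-random covering sets back to $\mu$-random covering sets, I extract from $\omega\sim\mu^{\mathbb N}$ the subsequence of indices $j_1<j_2<\cdots$ with $\omega_{j_k}\in K$: the sequence $(\omega_{j_k})_k$ is i.i.d.\ $\nu$-distributed, by the strong law $j_k/k\to\mu(K)^{-1}$ almost surely, and $\underline r':=(j_k^{-\alpha})_k$ still satisfies $s_2(\underline r')=1/\alpha$. The inclusion
\[
E_\alpha(\omega)\supseteq\limsup_{k\to\infty}B(\omega_{j_k},j_k^{-\alpha})
\]
then reduces everything to lower-bounding the Hausdorff dimension of a $\nu$-random covering set with radii $\underline r'$.

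This last step is where I apply Theorem~\ref{theorem:generallowerboundenergy} to $\nu$ with the radii $\underline r'$. The uniform Frostman exponent $1/\alpha-2\varepsilon$ enjoyed by $\nu$, together with $s_2(\underline r')=1/\alpha$, should feed into the tool's energy estimate to produce an almost sure lower bound essentially equal to $1/\alpha-2\varepsilon$; letting $\varepsilon\to 0$ then gives $f_\mu(\alpha)\ge 1/\alpha$. The main obstacle I anticipate is precisely this final verification: one has to check that in this uniform-Frostman regime Theorem~\ref{theorem:generallowerboundenergy} delivers a bound governed by the Frostman exponent of $\nu$ and by $s_2(\underline r')$, rather than being spoiled by any upper-local-dimension quantity. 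The sub-measure construction is designed so that the multifractal obstructions responsible for the $\delta$-loss in Theorem~\ref{EPthm} are removed on $K$ by fiat, which is what allows the argument to close the gap to $1/\alpha$.
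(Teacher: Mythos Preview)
Your reduction has a genuine gap at exactly the point you flag as the ``main obstacle''. Restricting $\mu$ to a set $K$ does \emph{not} remove the multifractal obstruction: by Lebesgue's density theorem one has $\dimlocu\nu(x)=\dimlocu\mu(x)$ for $\nu$-almost every $x$, so the gap $\dimlocu-\dimlocl$ is inherited intact by $\nu=\mu_K$. The Frostman property of $\nu$ controls $\dimlocl\nu$ from below but says nothing about $\dimlocu\nu$. To verify the hypothesis of Theorem~\ref{theorem:generallowerboundenergy} you must bound $I_t(\nu_{B(x,r_k)})$, and by Lemma~\ref{lemma:energyupperboundfrostmanmeasures} this is at most $C\,\nu(B(x,r_k))^{-t/s}$; bounding \emph{that} requires a lower bound on $\nu(B(x,r_k))$, i.e.\ control on $\dimlocu\nu$. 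With your choice of $K$ one may well have $\dimhu\nu=\dimhu\mu\gg 1/\alpha$, and then Corollary~\ref{corollary:generallowerboundmeasure} yields only $t\lesssim \dfrac{1/\alpha-2\varepsilon}{\alpha\,\dimhu\mu}$, which is far from $1/\alpha$. (Example~\ref{example:lowerboundmaybesharp} shows concretely that a Frostman exponent alone cannot force $I_t(\nu_{B(x,r)})\lesssim r^{-t}$.)

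The paper closes this gap by two coupled choices that your sketch misses. First, it restricts to the set $D$ where $\mu(B(x,r))\le Cr^{s-\varepsilon}$ with $s=\dimhu\mu$, not $s\approx 1/\alpha$; this makes the Frostman exponent of $\mu_D$ essentially equal to $\dimhu\mu_D$, so the ratio $u/s$ in Corollary~\ref{corollary:generallowerboundmeasure} is close to $1$. Second --- and this is the real heart of the proof --- Lemma~\ref{lemma:pointwisediv} exploits the specific structure of $r_k=k^{-\alpha}$: from $\dimlocl\mu_D(x)\le s$ one gets infinitely many $k$ with $\mu_D(B(x,r_k))\ge r_k^{s+\varepsilon}$, and the polynomial decay of $k^{-\alpha}$ forces each such $k$ to be preceded by a long stretch $[k^{1/\gamma},k]$ of indices with $\mu_D(B(x,r_l))\ge r_l^{s+2\varepsilon}$. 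These stretches are long enough to make $\sum_k r_k^{t'}\chi_{\widehat A_k^{s+2\varepsilon}}$ diverge for every $t'<1/\alpha$, \emph{without} any hypothesis on $\dimlocu$. This lemma is precisely what converts the Ekstr\"om--Persson bound $1/\alpha-\delta$ into $1/\alpha$; your argument has no analogue of it. (Your subsequence-extraction transfer from $\nu$ back to $\mu$ is a valid alternative to the paper's use of Lemma~\ref{lemma:ekstromperssonboundeddensities}, modulo the minor point that the extracted radii $j_k^{-\alpha}$ are random --- easily handled by bounding $j_k\le 2k/\mu(K)$ eventually a.s. --- but this is not where the difficulty lies.)
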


Since $E_\beta(\omega)\subset E_\alpha(\omega)\subset\spt\mu$ for $\beta>\alpha$,
we immediately obtain the following corollary.

\begin{corollary}
Let $\mu\in \mc{P}(\R^d)$ be such that $\dimh\spt\mu=\dimhu\mu$.
Then for every $\alpha>0$, 
\[
f_{\mu}(\alpha)=\min\{ \frac{1}{\alpha}, \dimh\spt\mu\}.
\]
\end{corollary}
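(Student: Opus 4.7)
The plan is to split into two cases according to whether $\frac{1}{\alpha}\le\dimhu\mu$ or $\frac{1}{\alpha}>\dimhu\mu$, and in each case use the identification $\dimhu\mu=\dimh\spt\mu$ given in the hypothesis together with Theorem~\ref{theorem:nminusalpha}.

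In the first case, $\min\{\tfrac{1}{\alpha},\dimh\spt\mu\}=\tfrac{1}{\alpha}$, so the equality follows immediately from Theorem~\ref{theorem:nminusalpha} applied to $\alpha$ itself.

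In the second case, write $s:=\dimh\spt\mu=\dimhu\mu<\tfrac{1}{\alpha}$, so that $\min\{\tfrac{1}{\alpha},\dimh\spt\mu\}=s$, and aim at proving $f_\mu(\alpha)=s$. The upper bound is immediate: since $E_\alpha(\omega)\subset\spt\mu$ for every $\omega$, we have $\dimh E_\alpha(\omega)\le\dimh\spt\mu=s$ surely, hence $f_\mu(\alpha)\le s$. For the matching lower bound, I would choose $\beta:=\tfrac{1}{s}$, which satisfies $\beta>\alpha$ (equivalently $\tfrac{1}{\beta}=s<\tfrac{1}{\alpha}$) and $\tfrac{1}{\beta}=\dimhu\mu$. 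Then, on the one hand, Theorem~\ref{theorem:nminusalpha} applies and gives $f_\mu(\beta)=\tfrac{1}{\beta}=s$. On the other hand, the inclusion $B(\omega_k,k^{-\beta})\subset B(\omega_k,k^{-\alpha})$ for each $k$ yields $E_\beta(\omega)\subset E_\alpha(\omega)$, and monotonicity of Hausdorff dimension gives $f_\mu(\alpha)\ge f_\mu(\beta)=s$, as required. The only degenerate situation is $s=0$, where $\beta=1/s$ is undefined; in that case the upper bound $f_\mu(\alpha)\le\dimh\spt\mu=0$ already forces $f_\mu(\alpha)=0=\min\{\tfrac{1}{\alpha},\dimh\spt\mu\}$, so no separate argument is really needed.

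There is essentially no obstacle here: the corollary is a straightforward combination of Theorem~\ref{theorem:nminusalpha} with the trivial monotonicity observation $E_\beta(\omega)\subset E_\alpha(\omega)$ for $\beta>\alpha$ stated just before the corollary. The only point requiring a moment's care is verifying that the auxiliary parameter $\beta=1/\dimhu\mu$ chosen in the second case does satisfy both the inclusion condition $\beta>\alpha$ and the hypothesis $\tfrac{1}{\beta}\le\dimhu\mu$ of Theorem~\ref{theorem:nminusalpha} simultaneously, which is exactly guaranteed by being in the case $\tfrac{1}{\alpha}>\dimhu\mu$.
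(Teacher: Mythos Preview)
Your proof is correct and follows essentially the same approach as the paper, which simply notes that the inclusions $E_\beta(\omega)\subset E_\alpha(\omega)\subset\spt\mu$ for $\beta>\alpha$, combined with Theorem~\ref{theorem:nminusalpha}, immediately yield the corollary. Your case split and the choice $\beta=1/s$ make explicit exactly the argument the paper leaves implicit.
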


Our second main theorem provides bounds for $f_{\mu}(\underline{r})$ for general
sequences $\underline{r}$.

\begin{theorem}\label{theorem:lowerboundforgeneralballs}
Let $\mu\in \mathcal{P}(\R^d)$ and let
$\underline{r}\coloneqq (r_k)_{k=1}^\infty$ be a sequence of positive
numbers tending to $0$. If $s_2(\underline{r})<\dimhu\mu$, then 
\[
s_2(\underline{r})\overline{\delta}\leq f_{\mu}(\underline{r})
  \leq s_2(\underline{r}),
\]
where
\[
\overline{\delta}\coloneqq \esssup_{\substack{x\sim\mu,\\
    \dimlocl \mu(x)>s_2(\underline{r})}}\frac{\dimlocl \mu(x)}{\dimlocu \mu(x)}.
\]
\end{theorem}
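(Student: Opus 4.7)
The upper bound $f_\mu(\underline{r}) \leq s_2(\underline{r})$ is already recorded in Section~\ref{notation}: for every realisation $\omega$ and every $N\in\N$, the inclusion $E_{\underline{r}}(\omega) \subseteq \bigcup_{k=N}^\infty B(\omega_k, r_k)$ combined with the convergence of $\sum_k r_k^s$ for $s > s_2(\underline{r})$ forces the Hausdorff $s$-measure of $E_{\underline{r}}(\omega)$ to vanish.

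For the lower bound the plan is to reduce the problem to a direct application of the main technical tool, Theorem~\ref{theorem:generallowerboundenergy}, by first restricting $\mu$ to a set on which the local dimensions are controlled uniformly. Fix $\epsilon > 0$ small. The hypothesis $s_2(\underline{r}) < \dimhu\mu$ ensures that the conditioning set $\{x \mid \dimlocl\mu(x) > s_2(\underline{r})\}$ in the definition of $\overline{\delta}$ has positive $\mu$-measure. By the definition of the essential supremum together with a pigeonhole argument over countably many rational intervals, one can choose numbers $a \leq b$ with $s_2(\underline{r}) < a$ and $a/b > \overline{\delta} - \epsilon$ such that
\[
A_{a,b} := \{x \in \spt\mu \mid \dimlocl\mu(x) \geq a \text{ and } \dimlocu\mu(x) \leq b\}
\]
has positive $\mu$-measure. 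An Egorov-type argument applied to the defining liminf and limsup of the local dimensions then extracts a subset $A \subseteq A_{a,b}$ with $\mu(A) > 0$ and a scale $r_0 > 0$ on which
\[
r^{b+\epsilon} \leq \mu(B(x, r)) \leq r^{a-\epsilon}
\]
uniformly for all $x \in A$ and $0 < r < r_0$.

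Next I would apply Theorem~\ref{theorem:generallowerboundenergy} to the normalised restriction $\nu := \mu|_A/\mu(A)$ and the sequence $\underline{r}$. The upper bound on $\mu$-masses supplies the Frostman-type input needed to estimate the $s$-energy of a random test measure constructed on $E_{\underline{r}}(\omega)$, while the matching lower bound guarantees that the relevant mass series still diverges below the natural threshold. Since $\nu \ll \mu$ and the centres are sampled i.i.d.\ from the law used to build the covering, any almost-sure lower bound on the dimension of the random covering set driven by $\nu$ transfers to $f_\mu(\underline{r})$ via the inclusion of supports. The energy theorem should output, for every $s$ strictly below $s_2(\underline{r})(a-\epsilon)/(b+\epsilon)$, a random probability measure supported on $E_{\underline{r}}(\omega)$ with almost surely finite $s$-energy; Frostman's energy characterisation of Hausdorff dimension then forces $\dim_H E_{\underline{r}}(\omega) \geq s_2(\underline{r})(a-\epsilon)/(b+\epsilon)$ almost surely. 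Letting $\epsilon \to 0$ and optimising $a/b$ towards $\overline{\delta}$ yields the desired bound $f_\mu(\underline{r}) \geq s_2(\underline{r})\overline{\delta}$.

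The principal obstacle is calibrating the exponents $a$ and $b$ correctly when feeding them into the energy integral. The factor $\overline{\delta} = a/b$ in the conclusion reflects the mismatch between the exponent $a$ governing the upper Frostman-type control on $\mu$-masses of small balls and the exponent $b$ governing how large those same masses typically are at the scales $r_k$; controlling this mismatch uniformly on $A$ is precisely the role of the Egorov refinement. The core technical verification is therefore that Theorem~\ref{theorem:generallowerboundenergy} is sensitive enough to reflect this ratio in its conclusion rather than collapsing to a bound controlled by either exponent alone.
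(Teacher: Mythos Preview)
Your plan matches the paper's proof in Section~\ref{proofofgeneral} almost exactly: restrict $\mu$ to a set where both local dimensions are pinned down, apply the main energy estimate (the paper uses Corollary~\ref{corollary:generallowerboundmeasure}, which is just Theorem~\ref{theorem:generallowerboundenergy} repackaged via Lemma~\ref{lemma:energyupperboundfrostmanmeasures}), then pass back to $\mu$ and let the small parameters vanish. The threshold you identify, $s_2(\underline r)(a-\epsilon)/(b+\epsilon)$, is exactly what Corollary~\ref{corollary:generallowerboundmeasure} produces once the Frostman exponent is $a-\epsilon$ and $u=b+\epsilon$.

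Two places where your justification is off and would need repair in a full write-up. First, the transfer from $f_\nu(\underline r)$ to $f_\mu(\underline r)$ does \emph{not} follow ``via the inclusion of supports'': the two random covering sets live on different probability spaces and there is no direct set inclusion. The correct tool is Lemma~\ref{lemma:ekstromperssonboundeddensities}, which uses that $\nu=\mu_A\ll\mu$ with \emph{bounded} density $d\nu/d\mu=\mu(A)^{-1}\chi_A$; this is exactly how the paper handles it. Second, your Egorov step gives $r^{b+\epsilon}\le\mu(B(x,r))$ for $x\in A$, but Corollary~\ref{corollary:generallowerboundmeasure} needs a lower bound on $\nu(B(x,r_k))=\mu(A)^{-1}\mu(A\cap B(x,r_k))$, not on $\mu(B(x,r_k))$. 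The paper inserts Lebesgue's density theorem here to get $\mu(A\cap B(x,r))/\mu(B(x,r))\to 1$ for $\nu$-a.e.\ $x$, which then forces $x\in\widehat A_k^{u}$ for all large $k$ after a harmless enlargement of $u$. Without that step the hypothesis \eqref{equation:measureassumptionforlowerbound} for $\nu$ is not verified.
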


\begin{remark}
a) One may apply the methods used in \cite{EP2018} to prove
Theorem~\ref{EPthm} also for a general sequence $\underline r$. In order to do
this, one has to replace $\frac 1\alpha$ by $s_1(\underline r)$ in
some places and by $s_3(\underline r)$ in other places, and this leads
to the lower bound
\[
f_{\mu}(\underline{r})\geq s_1(\underline{r})-\hat\delta,
\]
where
\[
\hat\delta:=\essinf_{\substack{x\sim\mu,\\ \dimlocl \mu(x)>s_3(\underline r)}}
  \left( \dimlocu \mu(x)-\dimlocl \mu(x) \right).
\]
Recall that, for a general decreasing sequence $\underline r$, the
strict inequality $s_1(\underline r)<s_2(\underline r)$ is possible,
whilst $s_2(\underline r)=s_3(\underline r)$ for all decreasing sequences. 
        
b) The lower bound $s_2(\underline{r})\overline{\delta}$ obtained in
Theorem~\ref{theorem:lowerboundforgeneralballs} is always
positive and larger than the quantity $s_2(\underline{r})-\hat\delta$.
Indeed, fix $\mu \in \mc{P}(\R^{d})$ and a sequence $\underline{r}$ such that
$s_2\coloneqq s_2(\underline{r})<\overline{\dim}_{\mathrm{H}}\,\mu$.
By the definition of $\overline{\delta}$, we have for $\mu$-almost
every $x$ satisfying the property $\dimlocl\mu(x)>s_2$ that
\begin{align*}
s_2\overline{\delta}
  &\geq s_2 \frac{\dimlocl\mu(x)}{\dimlocu \mu(x)}= s_2-
    \frac{s_2}{\dimlocu \mu(x)}\left( \dimlocu \mu(x)-\dimlocl\mu(x)\right)\\
  &\geq s_2-\left( \dimlocu \mu(x)-\dimlocl\mu(x)\right),
\end{align*}
where the last inequality follows from the fact that 
$s_2<\dimlocl\mu(x)\leq \dimlocu \mu(x)$. Since the above inequality holds true
for $\mu$-almost every $x$ such that $\dimlocl\mu(x)>s_2$, we have (by the
definition of $\hat\delta$) that 
$s_2\overline{\delta}\geq s_2-\hat\delta$.
\end{remark}

By noting that $\overline{\delta}=1$ in
Theorem~\ref{theorem:lowerboundforgeneralballs}, if $\dimloc \mu(x)$ exists
and is larger than $s_2(\underline r)$ in a
set of positive measure, we obtain the following corollary.

\begin{corollary}
Suppose that $\mu\in \mc{P}(\R^d)$ is such that the local dimension
$\dimloc \mu(x)$ exists in a set of positive $\mu$-measure. Then, for any
sequence $\underline{r}$ satisfying
$s_2(\underline{r})<\esssup_{x\sim\mu}\dimloc\mu(x)$, we have that
\[
f_{\mu}(\underline{r})=s_2(\underline{r}). 
\]
\end{corollary}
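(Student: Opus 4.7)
The plan is to invoke Theorem~\ref{theorem:lowerboundforgeneralballs} directly, after verifying its hypothesis $s_2(\underline{r})<\dimhu\mu$ and computing that the quantity $\overline{\delta}$ appearing there equals $1$ under the assumptions of the corollary.

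First, I would unpack the assumption. By hypothesis there is a Borel set $A$ with $\mu(A)>0$ on which $\dimlocl\mu(x)=\dimlocu\mu(x)=\dimloc\mu(x)$, and the condition $s_2(\underline{r})<\esssup_{x\sim\mu}\dimloc\mu(x)$ means that the subset
\[
A':=\{x\in A\mid\dimloc\mu(x)>s_2(\underline{r})\}
\]
still has positive $\mu$-measure. On $A'$ we have $\dimlocl\mu(x)>s_2(\underline{r})$, so in particular $\dimhu\mu=\esssup_{x\sim\mu}\dimlocl\mu(x)>s_2(\underline{r})$, which is precisely the hypothesis needed in order to apply Theorem~\ref{theorem:lowerboundforgeneralballs}.

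Next, I would compute $\overline{\delta}$. For every $x\in A'$ the lower and upper local dimensions coincide, so $\dimlocl\mu(x)/\dimlocu\mu(x)=1$; since $\mu(A')>0$ and these $x$ belong to the set over which the essential supremum in the definition of $\overline{\delta}$ is taken, we obtain $\overline{\delta}\ge 1$. The reverse inequality $\overline{\delta}\le 1$ is immediate from $\dimlocl\mu(x)\le\dimlocu\mu(x)$. Hence $\overline{\delta}=1$.

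Finally, Theorem~\ref{theorem:lowerboundforgeneralballs} then yields
\[
s_2(\underline{r})=s_2(\underline{r})\,\overline{\delta}\le f_\mu(\underline{r})\le s_2(\underline{r}),
\]
giving the claimed equality. There is essentially no obstacle here: once Theorem~\ref{theorem:lowerboundforgeneralballs} is in hand, the corollary is just the observation that the presence of a positive-measure set of points with existing local dimension forces the ratio $\dimlocl\mu/\dimlocu\mu$ to attain the value $1$ on the relevant set, saturating the lower bound.
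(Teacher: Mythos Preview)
Your proof is correct and follows exactly the approach the paper takes: the corollary is stated immediately after the remark that $\overline{\delta}=1$ whenever $\dimloc\mu(x)$ exists and exceeds $s_2(\underline r)$ on a set of positive measure, and then Theorem~\ref{theorem:lowerboundforgeneralballs} gives the result. Your write-up simply fills in the (straightforward) details of that observation.
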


In the proof of our main technical result
(Theorem~\ref{theorem:generallowerboundenergy}), we will make use of potential
theoretic arguments. The $t$-potential of a measure $\mu\in \mathcal{P}(\R^d)$
at a point $x$ is defined by 
\[
\phi^{t}_{\mu}(x):=\int |x-y|^{-t}\,d\mu(y)
\]
and the $t$-energy of $\mu$ is 
\[
I_t(\mu):=\int\phi^{t}_{\mu}(x)\,d\mu(x)=\int\int|x-y|^{-t}\,d\mu(y)\,d\mu(x).
\]
The mutual $t$-energy of two measures $\mu,\nu\in\mathcal{P}(\R^d)$
is defined by
\[
J_t(\mu,\nu):=\int\int|x-y|^{-t}\,d\mu(y)\,d\nu(x).
\]
We also define the $t$-capacity of a set $A\subseteq \R^d$ by 
\[
\capa_t(A):=\sup\{ I_t(\mu)^{-1}\mid \mu\in \mathcal{P}(\R^d),\spt\mu\subset A\}.
\]
It is well known that if $\capa_t(A)>0$, then $\dimh A\geq t$ (see e.g.
\cite[Chapter 8]{Mat95}). A measure $\mu\in \mathcal{P}(\R^d)$ is
$(C,s)$-Frostman if $\mu(B(x,r))\leq Cr^{s}$ for every $x\in \R^d$ and $r>0$. If
there is no need to emphasise the constant $C$, we will just say that $\mu$ is
$s$-Frostman. It is well known that $I_t(\mu)<\infty$
for all $t<s$ provided $\mu$ is $s$-Frostman (see \cite[p. 109]{Mat95}). 
Given $\mu\in \mathcal{P}(\R^d)$ and $A\subseteq \R^d$, we write $\mu_{A}$ for
the normalised restriction of $\mu$ to $A$, that is,
\[
\mu_{A}(B)=\mu(A)^{-1}\mu_{|_{A}}(B)=\mu(A)^{-1}\mu(A\cap B)
\]
for $B\subseteq \R^d$ with the interpretation that $\mu_{A}$ is the zero measure
whenever $\mu_{|_A}$ is.

The proofs of Theorems~\ref{theorem:nminusalpha} and
\ref{theorem:lowerboundforgeneralballs} are based on the following theorem,
proved in Section~\ref{section:proofofthmlowerboundenergy}.

\begin{theorem}\label{theorem:generallowerboundenergy}
Let  $\mu\in \mathcal{P}(\R^d)$ be $(C,s)$-Frostman. Let
$(r_k)_{k=1}^{\infty}$ be a decreasing sequence of positive numbers tending to
zero. Let $0<t<s$. Suppose that there exists a bounded sequence of
positive numbers $(b_k)_{k=1}^{\infty}$ such that
\begin{equation}\label{equation:energyassumptionforlowerbound}
\mu\Bigl(\Bigl\{ x\in\spt\mu\mid\sum_{k=1}^\infty\chi_{A_k^{t}}(x)b_k
  =\infty\Bigr\}\Bigr)=1,
\end{equation}
where
$A_k^{t}:=\left\{x\in\spt\mu\mid I_t(\mu_{B(x,r_k)})\leq b_k^{-1}\right\}$
and $\chi_B$ is the characteristic function of a set $B$.  Then
$f_{\mu}(\underline{r})\geq t$.
\end{theorem}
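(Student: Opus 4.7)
The plan is to apply the potential-theoretic criterion for Hausdorff dimension: for each $\omega$ in an event of positive $\P$-probability I will construct a Borel probability measure $\nu=\nu(\omega)$ supported on $E_{\underline r}(\omega)$ with $I_t(\nu)<\infty$, so that $\capa_t(E_{\underline r}(\omega))>0$ and hence $\dimh E_{\underline r}(\omega)\ge t$. Since this latter quantity is $\P$-almost surely constant by Kolmogorov's zero-one law (see the remark after Definition~\ref{definition:randomcoveringset}), the positive-probability conclusion upgrades to the required almost sure lower bound.

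The candidate measures are built from the restrictions $\mu_{B(\omega_k,r_k)}$, weighted so as to concentrate on the good scales where the self-energy is controlled by the hypothesis. For integers $N\le M$ I would consider the unnormalised random measure
\[
\widetilde\nu_{N,M}(\omega):=\sum_{k=N}^{M} b_k\,\chi_{A_k^t}(\omega_k)\,\mu_{B(\omega_k,r_k)},
\]
of total mass $S_{N,M}(\omega):=\sum_{k=N}^M b_k\chi_{A_k^t}(\omega_k)$, and set $\nu_{N,M}:=\widetilde\nu_{N,M}/S_{N,M}$. Integrating (\ref{equation:energyassumptionforlowerbound}) against $\mu$ and applying Fubini yields $\sum_k b_k\mu(A_k^t)=\infty$; combined with independence of the centres $\omega_k$, boundedness of $(b_k)$, and a Borel--Cantelli / variance argument this produces $S_{N,M}\to\infty$ with $S_{N,M}$ concentrated around its mean. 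The core technical estimate is a second-moment bound of the form
\[
\E\bigl[I_t(\widetilde\nu_{N,M})\bigr]\le C_0\,\bigl(\E[S_{N,M}]\bigr)^2,
\]
with $C_0$ independent of $M$. The diagonal contribution is controlled immediately by $I_t(\mu_{B(\omega_k,r_k)})\le b_k^{-1}$ on $A_k^t$. For the off-diagonal contribution, independence of $\omega_j$ and $\omega_k$ together with Fubini reduces matters to estimating integrals of $|x-y|^{-t}$ against the $\mu$-potential, which is uniformly bounded since $\mu$ is $(C,s)$-Frostman with $t<s$. A Paley--Zygmund / Chebyshev argument then yields $\P(I_t(\nu_{N,M})\le 2C_0)\ge c>0$ uniformly in $M$.

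Weak-$*$ compactness on the compact support of $\mu$ together with lower semicontinuity of the $t$-energy produces, along a subsequence, a probability measure $\nu_N$ with $I_t(\nu_N)\le 2C_0$ and $\spt\nu_N\subseteq\overline{\bigcup_{k\ge N}B(\omega_k,r_k)}$. A further extraction over $N\to\infty$ yields $\nu$ supported in $\bigcap_N\overline{\bigcup_{k\ge N}B(\omega_k,r_k)}$, and a standard perturbation (slightly shrinking the radii used in the construction and letting the shrinkage tend to zero along a countable family of auxiliary measures) places $\spt\nu$ inside $E_{\underline r}(\omega)$ itself. I expect the principal obstacle to be the off-diagonal energy estimate: after unwinding the normalisation $\mu_{B(u,r_j)}$, the factor $\mu(B(u,r_j))^{-1}$ appears and has no uniform lower bound in the absence of a doubling assumption on $\mu$, so the Fubini swap must be carried out in the specific order that causes the singular centres to integrate out against the Frostman upper bound $\mu(B(\cdot,r))\le Cr^s$ rather than creating unbounded contributions.
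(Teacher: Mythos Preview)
Your overall plan---build random measures from the normalised restrictions $\mu_{B(\omega_k,r_k)}$ weighted by $b_k\chi_{A_k^t}(\omega_k)$, control their $t$-energy in expectation, and extract a limit supported on the limsup set---is the same skeleton as the paper's argument, and the diagonal term is indeed harmless for exactly the reason you give. The gap is in the off-diagonal term, and the fix you propose does not work.

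After the Fubini swap, the off-diagonal contribution becomes
\[
\int\!\!\int |x-y|^{-t}\,K(x)\,K(y)\,d\mu(x)\,d\mu(y),
\qquad
K(x)=\sum_{k=N}^{M} b_k\int_{A_k^t\cap B(x,r_k)}\frac{d\mu(u)}{\mu(B(u,r_k))}.
\]
The Frostman hypothesis is an \emph{upper} bound $\mu(B(u,r_k))\le Cr_k^{s}$, so it only gives a \emph{lower} bound on $\mu(B(u,r_k))^{-1}$, which is the wrong direction. There is no ordering of the integrals that makes this factor disappear: without a doubling assumption $\mu(B(u,r_k))$ can be arbitrarily small, $K$ can be pointwise unbounded, and one cannot conclude $I_t(K\,d\mu)\lesssim\bigl(\int K\,d\mu\bigr)^{2}$ from the Frostman property of $\mu$ alone. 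The hypothesis $u\in A_k^t$ controls the self-energy of $\mu_{B(u,r_k)}$, not the size of $K$ at nearby points $x$.

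The paper meets exactly this obstruction. It first reweights the construction (the partition into the sets $A_{n,\mathbf{i}}$ with weights $a_{n,\mathbf{i}}$) so that the expected approximating measure is a restriction $\mu_{F_n}$ close to $\mu$. The resulting kernel, called $E_n(x)$ there, is precisely the quantity $K(x)$ above after this normalisation; although it is pointwise unbounded, one checks that $\int E_n\,d\mu=1$. Markov's inequality then produces a set $G$ of nearly full $\mu$-measure on which $E_n(x)<n^{2}$ for all large $n$, and a Lebesgue density argument (the sets $H_{q_n}$) lets one replace $\mu$ by $\mu_{|G}$ and transfer the ball-measure comparison to the restricted measure. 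With this pointwise control in hand, the off-diagonal energy is split into a far part, handled essentially as you suggest via $|x-y|\gtrsim|\omega_l-\omega_m|$, and a near part, where the bound $E_n\le n^{2}$ on $G$ combined with Lemma~\ref{lemma:energyupperboundfrostmanmeasures} gives a contribution of order $n^{4}\beta^{s-t}$ with $\beta=2^{-n}$, which vanishes. Finally, rather than chasing weak-$*$ subsequential limits and shrinking radii to force the support into $E_{\underline r}(\omega)$, the paper packages everything into continuous densities $\varphi_n^{\omega}$ supported in $\bigcup_{k=M_n}^{N_n}B(\omega_k,r_k)$ and invokes the deterministic capacity lemma for limsup sets (Lemma~\ref{lemma:deterministiclowerbound}), which delivers $\capa_t(E_{\underline r}(\omega))>0$ directly and avoids the closure issue you flag in your last step.
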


\section{Preliminary lemmas}\label{lemmas}

In this section, we state and prove some lemmas needed in the proof
of Theorem~\ref{theorem:generallowerboundenergy}.

\begin{lemma}\label{lemma:mattila1.15}
Let $\mu\in \mathcal{P}(\R^d)$ and let $\varphi\geq 0$ be a Borel function. Then
\[
\int \varphi \, d\mu= \int_{0}^{\infty}\mu(\{z\mid \varphi(z)>t \})\,dt.
\]
\end{lemma}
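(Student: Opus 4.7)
The plan is to derive this standard distribution-function (layer-cake) identity by a direct application of Tonelli's theorem. The strategy has two short steps, and the result is essentially book-level (it is Mattila's Theorem 1.15, as the label suggests).

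First I would rewrite $\varphi(z)$ pointwise as a Lebesgue integral of an indicator: since $\varphi(z)\ge 0$, one has the elementary identity
\[
\varphi(z)=\int_0^{\varphi(z)}1\,dt=\int_0^{\infty}\chi_{\{t\,:\,\varphi(z)>t\}}(t)\,dt.
\]
Integrating both sides against $\mu$ expresses $\int\varphi\,d\mu$ as an iterated integral over $\R^d\times[0,\infty)$ of the non-negative function $(z,t)\mapsto \chi_{\{\varphi(z)>t\}}$.

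Second, I would swap the order of integration by Tonelli's theorem. The resulting inner integral is precisely $\mu(\{z\mid\varphi(z)>t\})$, yielding the claim. The only technical point to check before invoking Tonelli is joint measurability of the integrand: because $\varphi$ is Borel, the set $\{(z,t)\in\R^d\times[0,\infty)\mid\varphi(z)>t\}$ lies in the product Borel $\sigma$-algebra (it is the preimage of $(0,\infty)$ under the Borel map $(z,t)\mapsto \varphi(z)-t$), so the integrand is jointly measurable and, being non-negative, is automatically amenable to Tonelli.

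There is no substantive obstacle to overcome; the argument is completely routine, and neither the compact support nor the probability nature of $\mu$ plays any role beyond ensuring $\sigma$-finiteness of the measures involved. If one were worried about degenerate cases, the identity extends without change to $\varphi$ taking the value $+\infty$, since both sides are then simultaneously infinite.
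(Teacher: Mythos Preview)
Your proof is correct and is exactly the standard layer-cake argument; the paper itself does not give a proof but simply cites \cite[Theorem~1.15]{Mat95}, noting that the same argument works with the strict inequality $\varphi(z)>t$ in place of $\varphi(z)\ge t$. Your write-up is precisely that argument, so there is nothing to add.
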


\begin{proof} See \cite[Theorem 1.15]{Mat95}. In that theorem, it
is assumed that $\varphi(z)\ge t$, but the same proof works also in the case
$\varphi(z)>t$.    
\end{proof}

\begin{lemma}\label{lemma:energyupperboundfrostmanmeasures}
Assume that $\mu\in \mathcal{P}(\R^d)$ is $(C,s)$-Frostman. 
Let $0<t<s$, $z_0\in\spt\mu$ and $r_0>0$. Then the following
inequalities are valid. For every $y\in \R^d$,
\begin{align*}
\phi^{t}_{\mu_{|_{B(z_0,r_0)}}}(y)&\leq\frac{C^{\frac ts}s}{s-t}\mu(B(z_0,r_0))^{1-\frac ts}
   \leq\frac{Cs}{s-t} r_0^{s-t}\text{ and}\\
   I_{t}(\mu_{B(z_0,r_0)})&\leq\frac{C^{\frac ts}s}{s-t}\mu(B(z_0,r_0))^{-\frac ts}.
\end{align*}
\end{lemma}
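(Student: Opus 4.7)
The plan is to prove the two potential bounds first, from which the energy bound follows by integration and rescaling. For the potential bound, I would fix $y\in\mathbb R^d$ and apply the layer-cake formula (Lemma~\ref{lemma:mattila1.15}) to the nonnegative Borel function $\varphi(x):=|x-y|^{-t}\chi_{B(z_0,r_0)}(x)$. The superlevel set $\{\varphi>u\}$ is exactly $B(y,u^{-1/t})\cap B(z_0,r_0)$, so its $\mu$-measure is bounded by the minimum of $\mu(B(z_0,r_0))$ and $\mu(B(y,u^{-1/t}))\le C u^{-s/t}$ (by the Frostman hypothesis).

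The natural threshold at which these two upper bounds coincide is $u_0:=(C/M)^{t/s}$, where $M:=\mu(B(z_0,r_0))$. Splitting the layer-cake integral at $u_0$ gives
\[
\phi^{t}_{\mu_{|_{B(z_0,r_0)}}}(y)\le \int_0^{u_0}M\,du+\int_{u_0}^\infty Cu^{-s/t}\,du
   = Mu_0+\frac{Ct}{s-t}u_0^{1-s/t},
\]
where the tail integral converges because $s>t$. Substituting $u_0=(C/M)^{t/s}$, both terms simplify to $C^{t/s}M^{1-t/s}$ up to constants, and their sum equals $\frac{C^{t/s}s}{s-t}M^{1-t/s}$, which is the first claimed inequality. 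The second follows immediately from the Frostman bound $M\le Cr_0^s$, since $M^{1-t/s}\le C^{1-t/s}r_0^{s-t}$ and the factor $C^{t/s}\cdot C^{1-t/s}=C$ assembles into the stated $\tfrac{Cs}{s-t}r_0^{s-t}$.

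For the energy bound, I would use the scaling $\mu_{B(z_0,r_0)}=M^{-1}\mu_{|_{B(z_0,r_0)}}$, which yields $\phi^t_{\mu_{B(z_0,r_0)}}=M^{-1}\phi^t_{\mu_{|_{B(z_0,r_0)}}}$ and hence
\[
I_{t}(\mu_{B(z_0,r_0)})=\frac{1}{M^2}\int\phi^{t}_{\mu_{|_{B(z_0,r_0)}}}(x)\,d\mu_{|_{B(z_0,r_0)}}(x).
\]
Inserting the pointwise potential bound just proved and using that $\mu_{|_{B(z_0,r_0)}}$ has total mass $M$ produces $\frac{1}{M^2}\cdot\frac{C^{t/s}s}{s-t}M^{1-t/s}\cdot M=\frac{C^{t/s}s}{s-t}M^{-t/s}$, which is exactly the second inequality.

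There is no real obstacle here; the only point requiring a moment's care is getting the exponent bookkeeping in the layer-cake split right so that the two contributions combine cleanly into the factor $\frac{s}{s-t}=1+\frac{t}{s-t}$, and verifying that the restriction $z_0\in\spt\mu$ (ensuring $M>0$) is only needed for the energy estimate, while the potential estimate holds trivially (as $0$) when $M=0$.
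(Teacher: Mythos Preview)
Your proposal is correct and follows essentially the same approach as the paper: both apply the layer-cake formula (Lemma~\ref{lemma:mattila1.15}) to the restricted potential, split the resulting integral at the threshold where the Frostman bound $Cu^{-s/t}$ meets the trivial bound $M=\mu(B(z_0,r_0))$, and then deduce the energy estimate by normalising and integrating. The only cosmetic difference is that the paper performs the change of variables $a=\gamma^{-1/t}$ before splitting (at $\rho_0=(C^{-1}M)^{1/s}$, which is exactly your $u_0^{-1/t}$), whereas you split directly in the $u$-variable; the arithmetic and final constants agree.
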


\begin{proof}
The second inequality on the first line is trivial, since $\mu$ is
$(C,s)$-Frostman. To prove the first inequality, write
$B_0\coloneqq B(z_0,r_0)$ and let 
\[
\rho_0:=\sup\{0<\rho\leq r_0\mid C\rho^{s}\leq \mu(B_0)\}.
\]
Since $0<\mu(B_0)\leq Cr_0^{s}$ and the map $\rho\mapsto C\rho^{s}$
is continuous, $\rho_0\in\mathopen]0,r_0\mathclose]$ and
$\rho_0=\left(C^{-1}\mu(B_0)\right)^{\frac 1s}$. Fix $y\in \R^d$. By
Lemma~\ref{lemma:mattila1.15} and a change of variables,
\begin{align*}
\phi^{t}_{\mu_{|_{B_0}}}(y)&=\int_{B_0}|x-y|^{-t}\,d\mu(x)
    =\int_{0}^{\infty}\mu(\{x\in B_0\mid |x-y|^{-t}>\gamma\})\, d\gamma\\
 &=\int_{0}^{\infty}\mu(B_0\cap B(y,\gamma^{-\frac 1t}))\, d\gamma
    = t\int_{0}^{\infty}\mu(B_0\cap B(y,a))a^{-(t+1)}\,da\\
 &\leq t\left(\int_{0}^{\rho_0}Ca^{s-t-1}\,da+\int_{\rho_0}^{\infty}\mu(B_0)a^{-(t+1)}
    \,da \right)
    \le t\left(\frac{C}{s-t}\rho_{0}^{s-t}+\mu(B_0)\frac{\rho_{0}^{-t}}{t}\right)\\
 &=\frac{Ct}{s-t}\left(C^{-1}\mu(B_0) \right)^{\frac{s-t}{s}}+\mu(B_0)(\left(C^{-1}
    \mu(B_0) \right)^{\frac 1s})^{-t}\\
 &=\left(\frac{Ct}{s-t}C^{\frac{t-s}{s}}+C^{\frac ts} \right)\mu(B_0)^{1-\frac ts}
    =\frac{C^{\frac ts}s}{s-t}\mu(B_0)^{1-\frac ts},
\end{align*}
which proves the first inequality. The last inequality follows by
multiplying this point wise estimate by $\mu(B_0)^{-1}$ and integrating with
respect to the probability measure $\mu_{B_0}$. 
\end{proof}

\begin{lemma}\label{lemma:liminfatmostexpectation}
Let $(\xi_n)_{n=1}^\infty$ be a sequence of independent random variables. Then 
\[
\liminf_{n\to \infty} \xi_n\leq \liminf_{n\to \infty} \mathbb{E}(\xi_n)
\]
almost surely.
\end{lemma}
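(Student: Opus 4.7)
My plan is to combine a Markov-type tail estimate with the second Borel--Cantelli lemma. Set $L := \liminf_{n\to\infty} \mathbb{E}(\xi_n)$. If $L = \infty$ the conclusion is immediate, so I would assume $L < \infty$ throughout. The argument naturally requires a uniform lower bound on the $\xi_n$ (for example $\xi_n \geq 0$), which is the setting in which the lemma will be applied within the proof of Theorem~\ref{theorem:generallowerboundenergy}; this will let me use Markov's inequality below.

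Given $\epsilon > 0$, first I would extract a subsequence $(n_k)_{k=1}^\infty$ along which $\mathbb{E}(\xi_{n_k}) \leq L + \epsilon/2$ for every $k$. By Markov's inequality,
\[
\mathbb{P}(\xi_{n_k} \leq L + \epsilon) \geq 1 - \frac{\mathbb{E}(\xi_{n_k})}{L + \epsilon} \geq \frac{\epsilon/2}{L + \epsilon} > 0,
\]
a bound that is uniform in $k$. The events $\{\xi_{n_k} \leq L + \epsilon\}$ are independent because the $\xi_{n_k}$ are, and the sum of their probabilities is infinite, so the second Borel--Cantelli lemma implies that almost surely $\xi_{n_k} \leq L + \epsilon$ for infinitely many $k$. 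Consequently $\liminf_{n\to\infty} \xi_n \leq L + \epsilon$ almost surely.

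To finish, I would intersect these almost sure events over the countable set $\epsilon \in \{1/m : m \in \mathbb{N}\}$, which yields $\liminf_{n\to\infty} \xi_n \leq L$ almost surely. The step that needs the most care is the Markov estimate: this is where the non-negativity of the $\xi_n$ is used, since otherwise $\xi_{n_k}$ could place significant mass simultaneously far above and far below $L$ while keeping $\mathbb{E}(\xi_{n_k})$ close to $L$, which would preclude the uniform positive lower bound on $\mathbb{P}(\xi_{n_k} \leq L + \epsilon)$ that drives the divergence of probabilities in the Borel--Cantelli step.
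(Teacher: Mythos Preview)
Your argument is correct and is essentially the standard proof: a Markov bound along a subsequence of near-minimal expectations, followed by the second Borel--Cantelli lemma and a countable intersection over $\epsilon$. The paper itself does not give a proof but simply cites \cite[Lemma~3.6]{EJJ2020}; your write-up is what one expects that reference to contain. Your observation that the lemma, as literally stated, needs a lower bound on the $\xi_n$ (non-negativity suffices) is accurate---without it one can build counterexamples by putting small mass far below the mean---and you are right that in the application to $I_t(\varphi_n^\omega\rho\,d\mu_{|_G})$ the random variables are non-negative energies, so the hypothesis is met.
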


\begin{proof}
\cite[Lemma 3.6]{EJJ2020}.
\end{proof}

\begin{lemma}\label{lemma:conditiononalmostfullsets}
Let $X\subset\R^d$ be compact and let $\nu\in \mc{P}(X)$. Suppose that
$(A_n)_{n=1}^\infty$ is a sequence of Borel subsets of $X$ with
$\lim_{n\to\infty}\nu(A_n)=1$. Then $\lim_{n\to \infty}\nu_{A_n}=\nu$ in
the weak-$\ast$ topology and
\[
\lim_{n\to \infty}I_t(\nu_{A_n})=I_t(\nu)
\]
for every $t>0$.
\end{lemma}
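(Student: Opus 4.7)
The plan is to treat the two conclusions separately, tied together by the common observation that $\nu(A_n)\to 1$ forces $\nu(A_n)^{-1}\to 1$, so $\nu_{A_n}$ differs from $\nu$ only by deleting a piece of $\nu$-mass that vanishes in the limit, followed by a benign rescaling. For weak-$\ast$ convergence, I would fix $f\in C(X)$, use compactness of $X$ to bound $\|f\|_{\infty}<\infty$, and split
\[
\int f\,d\nu_{A_n}-\int f\,d\nu=\Bigl(\tfrac{1}{\nu(A_n)}-1\Bigr)\int_{A_n}f\,d\nu-\frac{1}{\nu(A_n)}\int_{X\setminus A_n}f\,d\nu,
\]
so that both summands are bounded by a constant multiple of $\nu(X\setminus A_n)\to 0$.

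For the energy statement, I would start from
\[
\nu(A_n)^{2}\,I_t(\nu_{A_n})=\iint_{A_n\times A_n}|x-y|^{-t}\,d\nu(x)\,d\nu(y)
\]
together with the elementary estimate $(\nu\otimes\nu)\bigl((X\times X)\setminus(A_n\times A_n)\bigr)\le 2\nu(X\setminus A_n)\to 0$. When $I_t(\nu)<\infty$, the kernel $|x-y|^{-t}$ belongs to $L^{1}(\nu\otimes\nu)$, and absolute continuity of the integral forces the piece of the double integral living off $A_n\times A_n$ to vanish. Dividing by $\nu(A_n)^{2}\to 1$ then yields $I_t(\nu_{A_n})\to I_t(\nu)$.

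When $I_t(\nu)=\infty$, I would truncate: set $K_M(x,y)\coloneqq\min\{|x-y|^{-t},M\}$, which is continuous on all of $X\times X$ (on the diagonal, the blow-up of the kernel and the cap force the common value $M$) and bounded by $M$. By the first paragraph, $\nu_{A_n}\otimes\nu_{A_n}\to\nu\otimes\nu$ weakly-$\ast$, so $\iint K_M\,d\nu_{A_n}\,d\nu_{A_n}\to\iint K_M\,d\nu\,d\nu$ for each fixed $M$, while monotone convergence sends the latter quantity up to $I_t(\nu)=\infty$ as $M\to\infty$. Hence, for any prescribed $L$ one first chooses $M$ large and then $n_0$ large enough that $I_t(\nu_{A_n})\ge\iint K_M\,d\nu_{A_n}\,d\nu_{A_n}>L$ for all $n\ge n_0$, giving $I_t(\nu_{A_n})\to\infty$. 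The essential obstacle throughout is the non-integrable singularity of $|x-y|^{-t}$; once one dispatches it by absolute continuity of the integral in the finite-energy case and by the truncation argument above in the infinite-energy case, the remainder of the proof is routine bookkeeping.
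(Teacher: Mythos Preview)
Your argument is correct. One tiny arithmetic slip: in the displayed decomposition for weak-$\ast$ convergence, either the first integral should be $\int_X f\,d\nu$ rather than $\int_{A_n}f\,d\nu$, or the factor $\frac{1}{\nu(A_n)}$ on the second term should be dropped (depending on which of the two natural splittings you intended); either way the two summands are still $O(\nu(X\setminus A_n))$, so the conclusion stands. The energy argument is clean: in the finite-energy case absolute continuity of the $L^1(\nu\otimes\nu)$ integral against sets of vanishing product measure does the job, and in the infinite-energy case the truncation $K_M$ combined with the already-established weak-$\ast$ convergence (or, more simply, the same boundedness-and-small-complement estimate) forces $I_t(\nu_{A_n})\to\infty$.

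As for comparison with the paper: there is nothing to compare, since the paper does not prove this lemma but simply cites \cite[Lemma~3.5]{FJJS2018}. Your self-contained argument is a perfectly good substitute for that citation.
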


\begin{proof}
\cite[Lemma 3.5]{FJJS2018}.
\end{proof}

We finish this section with a simplified version of a lemma proved in
\cite{EP2018}.  

\begin{lemma}\label{lemma:ekstromperssonboundeddensities}
Let $\mu\ll\nu$ be Borel probability measures on a separable metric
space $X$ such that the density $\frac{d\mu}{d\nu}$ is bounded. Then
$f_{\mu}(\underline{r})\leq f_{\nu}(\underline{r})$ for every sequence
$\underline{r}$.
\end{lemma}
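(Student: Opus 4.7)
The plan is to use a coupling argument based on the mixture decomposition of $\nu$ that the bounded density provides. Let $M:=\|d\mu/d\nu\|_\infty$. Since $\mu$ and $\nu$ are probability measures, $M\geq 1$, with equality only when $\mu=\nu$, in which case the claim is immediate; hence we may assume $M>1$ and set $p:=1/M\in (0,1)$. The hypothesis $d\mu/d\nu \leq M$ ensures that $\eta:=(1-p)^{-1}(\nu-p\mu)$ is a Borel probability measure, so $\nu=p\mu+(1-p)\eta$. On an auxiliary probability space, let $(\omega_k^\mu)_{k\geq 1}$, $(\omega_k^\eta)_{k\geq 1}$ and $(\epsilon_k)_{k\geq 1}$ be three mutually independent sequences with respective laws i.i.d.\ $\mu$, i.i.d.\ $\eta$, and i.i.d.\ Bernoulli$(p)$. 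Setting $\omega_k^\nu := \omega_k^\mu$ on $\{\epsilon_k=1\}$ and $\omega_k^\nu:=\omega_k^\eta$ otherwise makes $(\omega_k^\nu)_{k\geq 1}$ an i.i.d.\ $\nu$-distributed sequence, while $(\omega_k^\mu)_{k\geq 1}$ remains i.i.d.\ $\mu$-distributed.

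Let $k_1<k_2<\cdots$ enumerate $\{k:\epsilon_k=1\}$, an almost surely infinite set of asymptotic density $p$ by the strong law of large numbers. The key pointwise inclusion
\[
E_{\underline r}(\omega^\nu)=\limsup_{k\to\infty}B(\omega_k^\nu,r_k)\supseteq\limsup_{j\to\infty}B(\omega_{k_j}^\mu,r_{k_j})
\]
exhibits inside $E_{\underline r}(\omega^\nu)$ a subset which, conditional on $(\epsilon_k)_k$, is a $\mu$-random covering set with the thinned radii $(r_{k_j})_j$. Its almost sure Hausdorff dimension equals $f_\mu((r_{k_j})_j)$, so that $f_\nu(\underline r)\geq f_\mu((r_{k_j})_j)$ almost surely.

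The principal obstacle is then to verify that random thinning does not decrease $f_\mu$, namely $f_\mu((r_{k_j})_j)=f_\mu(\underline r)$ almost surely, whence $f_\nu(\underline r)\geq f_\mu(\underline r)$. The upper bound is immediate from the containment $\limsup_jB(\omega_{k_j}^\mu,r_{k_j})\subseteq\limsup_kB(\omega_k^\mu,r_k)$ (fewer balls, the same centres). For the matching lower bound, I would fix an integer $N\geq M$, partition $\N$ into the $N$ residue classes $\Lambda_0,\ldots,\Lambda_{N-1}$ modulo $N$, and use the decomposition
\[
E_{\underline r}(\omega^\mu)=\bigcup_{i=0}^{N-1}\limsup_{k\in\Lambda_i}B(\omega_k^\mu,r_k)
\]
together with the finite-union formula for Hausdorff dimension to obtain $f_\mu(\underline r)=\max_i f_\mu((r_k)_{k\in\Lambda_i})$; thus some residue class $\Lambda_{i^*}$ attains $f_\mu(\underline r)$ a.s. Since $\{k_j\}$ almost surely meets each $\Lambda_i$ in a subset of positive density $p/N$, combining the pointwise Borel-Cantelli characterisation of membership in the covering set (a given $y$ lies in $E_{\underline r}$ a.s.\ iff $\sum_k \mu(B(y,r_k))=\infty$, and the latter is preserved under Bernoulli thinning) with the Kolmogorov zero-one law to promote the resulting event to a.s.\ constancy gives the required lower bound $f_\mu((r_{k_j})_j)\geq f_\mu(\underline r)$ almost surely. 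Chaining the two inequalities completes the proof.
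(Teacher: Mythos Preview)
The paper does not supply its own argument here; it simply cites \cite[Lemma~9.3]{EP2018}. Your mixture coupling $\nu=p\mu+(1-p)\eta$ and the inclusion $E_{\underline r}(\omega^\nu)\supseteq\limsup_j B(\omega^\mu_{k_j},r_{k_j})$ are correct, and the proof does reduce, as you say, to showing that Bernoulli thinning preserves $f_\mu$.

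Your justification of that last step, however, does not go through. The deterministic residue-class decomposition gives only that \emph{some} class $\Lambda_{i^*}$ attains $f_\mu(\underline r)$; since the random set $\{k_j\}$ meets $\Lambda_{i^*}$ merely in a proper random subset, you are thrown back to the original thinning question for the sequence $(r_k)_{k\in\Lambda_{i^*}}$. The pointwise Borel--Cantelli input you then invoke does not control Hausdorff dimension: the deterministic set $\{y:\sum_k\mu(B(y,r_k))=\infty\}$ of points that are almost surely covered can be empty while $f_\mu(\underline r)>0$. For instance, take $\mu$ to be Lebesgue measure on $[0,1]$ and any $\underline r$ with $0<s_2(\underline r)<1$: then $\sum_k\mu(B(y,r_k))\le 2\sum_k r_k<\infty$ for every $y$, yet $f_\mu(\underline r)=s_2(\underline r)>0$. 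Knowing which individual points are a.s.\ covered says nothing about the dimension of the random set, and the zero--one law only tells you the thinned dimension is a.s.\ constant without identifying that constant. A correct route is to replace your deterministic residue classes by a \emph{random} uniform $N$-colouring $(c_k)$ with $1/N\le p$, independent of everything else: each colour class is a Bernoulli$(1/N)$ thinning, the colour-class limsup dimensions are a.s.\ constant (tail event) and equal by exchangeability of the colours, and since their union is the full limsup they all equal $f_\mu(\underline r)$; a monotone coupling of Bernoulli$(1/N)$ inside Bernoulli$(p)$ then gives $f_\mu((r_{k_j})_j)=f_\mu(\underline r)$ a.s., completing your scheme.
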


\begin{proof}
\cite[Lemma 9.3]{EP2018}.
\end{proof}

\section{Proof of Theorem \ref{theorem:generallowerboundenergy}}
\label{section:proofofthmlowerboundenergy}

In the proof of Theorem \ref{theorem:generallowerboundenergy}, we will make use
of the following deterministic result.

\begin{lemma}\label{lemma:deterministiclowerbound}
Let $\nu$ be a finite Borel measure on a compact metric space $X$ and let
$(\varphi_n)_{n=1}^\infty$ be a sequence of non-negative continuous functions on
$X$ such that $\lim_{n\to \infty}\varphi_{n}\,d\nu=\nu$ in the weak-$\ast$ topology
and $\liminf_{n\to\infty}I_t(\varphi_{n}\rho\,d\nu)\leq I_t(\rho\,d\nu)$ whenever
$\rho$ is a product of finitely many of the functions $\{\varphi_n\}_{n\in\N}$.
Then for every $t>0$,
\[
\capa_t\bigl(\spt\nu\cap\limsup_{n\to\infty}(\spt\varphi_n)\bigr)
  \geq \frac{\nu(X)^{2}}{I_t(\nu)}.
\]
\end{lemma}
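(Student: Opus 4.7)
The plan is to build, for each $\epsilon\in(0,\nu(X))$, a non-zero finite Borel measure $\tilde\nu=\tilde\nu^\epsilon$ supported in $\spt\nu\cap\limsup_n\spt\varphi_n$ whose total mass is at least $\nu(X)-\epsilon$ and whose $t$-energy is at most $I_t(\nu)+\epsilon$. Normalising $\tilde\nu$ to a probability measure and using the defining supremum of $\capa_t$ then gives
\[
\capa_t(\spt\nu\cap\limsup_n\spt\varphi_n)\geq\frac{\tilde\nu(X)^2}{I_t(\tilde\nu)}\geq\frac{(\nu(X)-\epsilon)^2}{I_t(\nu)+\epsilon},
\]
and letting $\epsilon\to 0$ yields the lemma. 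If $I_t(\nu)=\infty$ or $\nu(X)=0$ there is nothing to prove, so we may assume both quantities are finite and positive.

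The measure $\tilde\nu$ is produced as a weak-$\ast$ subsequential limit of the measures $\tilde\nu_k\coloneqq\rho_k\,d\nu$, where $\rho_k\coloneqq\varphi_{n_1}\cdots\varphi_{n_k}$ for a subsequence $n_1<n_2<\cdots$ chosen inductively. At step $k+1$, $\rho_k$ is a continuous function, and the two hypotheses are combined: weak-$\ast$ convergence $\varphi_n\,d\nu\to\nu$ applied to the continuous test function $\rho_k$ shows that $\int\varphi_n\rho_k\,d\nu\to\int\rho_k\,d\nu$, so for all sufficiently large $n$ the total mass of $\varphi_n\rho_k\,d\nu$ differs from $\tilde\nu_k(X)$ by less than $\epsilon\cdot 2^{-(k+1)}$; meanwhile $\liminf_n I_t(\varphi_n\rho_k\,d\nu)\leq I_t(\tilde\nu_k)$ provides infinitely many $n$ with $I_t(\varphi_n\rho_k\,d\nu)\leq I_t(\tilde\nu_k)+\epsilon\cdot 2^{-(k+1)}$. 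Picking any $n_{k+1}>n_k$ satisfying both bounds and telescoping yields $|\tilde\nu_k(X)-\nu(X)|<\epsilon$ and $I_t(\tilde\nu_k)<I_t(\nu)+\epsilon$ for every $k$.

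Since the masses $\tilde\nu_k(X)$ are uniformly bounded, Banach--Alaoglu on the space of finite Borel measures on the compact set $X$ yields a weak-$\ast$ subsequential limit $\tilde\nu$. Testing the convergence against the continuous constant function $1$ gives $\tilde\nu(X)=\lim_k\tilde\nu_k(X)\geq\nu(X)-\epsilon>0$; lower semicontinuity of the $t$-energy under weak-$\ast$ convergence (a consequence of the lower semicontinuity of $(x,y)\mapsto|x-y|^{-t}$ together with the fact that the product $\tilde\nu_k\times\tilde\nu_k$ converges weakly-$\ast$ to $\tilde\nu\times\tilde\nu$ on $X\times X$) yields $I_t(\tilde\nu)\leq\liminf_k I_t(\tilde\nu_k)\leq I_t(\nu)+\epsilon$. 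For the support, if $x\notin\spt\varphi_{n_k}$ for some $k$, there is an open neighbourhood $U$ of $x$ on which $\varphi_{n_k}\equiv 0$, so $\tilde\nu_j(U)=0$ for every $j\geq k$ and the Portmanteau theorem gives $\tilde\nu(U)\leq\liminf_j\tilde\nu_j(U)=0$. Hence $\spt\tilde\nu\subseteq\bigcap_k\spt\varphi_{n_k}\subseteq\limsup_n\spt\varphi_n$, while $\spt\tilde\nu\subseteq\spt\nu$ is automatic from $\tilde\nu_k\ll\nu$.

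The main point requiring care is the simultaneous control of mass and energy in the inductive step: the two hypotheses have different flavours (an actual weak-$\ast$ limit versus only a $\liminf$ inequality), and one must exploit that the energy bound holds along a cofinal set of indices while the mass approximation holds for every tail index, so that both conditions can be satisfied at a common $n_{k+1}$. Once the Cantor-type construction is in place, the passage to the limit, the support identification, and the capacity estimate are all standard weak-$\ast$ manipulations.
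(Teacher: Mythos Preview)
The paper does not give its own proof of this lemma; it simply cites \cite[Lemma~1.4]{EJJ2020}. Your argument is correct and is essentially the construction carried out in that reference: one inductively selects $n_1<n_2<\cdots$ so that the measures $\rho_k\,d\nu$ keep nearly full mass while their $t$-energies stay within $\epsilon$ of $I_t(\nu)$, then extracts a weak-$\ast$ limit and invokes lower semicontinuity of the energy together with the evident support containment $\spt\tilde\nu\subseteq\bigcap_k\spt\varphi_{n_k}$. The one point to make explicit is that the base step of your induction uses the energy hypothesis with the empty product $\rho_0=1$; this is the intended reading of ``finitely many'' and is consistent with how the hypothesis is verified (for arbitrary continuous $\rho$) in the proof of Theorem~\ref{theorem:generallowerboundenergy}.
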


\begin{proof}
\cite[Lemma 1.4]{EJJ2020}.
\end{proof}

We are now ready to prove Theorem \ref{theorem:generallowerboundenergy}.

\begin{proof}[Proof of Theorem \ref{theorem:generallowerboundenergy}]
  \renewcommand{\qedsymbol}{}
Fix $0<t<s$ and a sequence $(b_k)_{k=1}^\infty$ such that
\eqref{equation:energyassumptionforlowerbound} holds. Our aim is to show that
$f_{\mu}(\underline{r})\geq t$. Since $t>0$ is fixed, we will denote the sets 
$A_k^{t}$ from the statement simply by $A_k$. By our assumption,
\[
\sum_{k=1}^\infty b_k\chi_{A_k}(x)=\infty
\]
for $\mu$-almost every $x$. Hence, we can choose sequences $(M_n)_{n=1}^\infty$
and $(N_n)_{n=1}^\infty$ of natural numbers such that $M_n<N_n<M_{n+1}$ for every
$n\in \N$, 
\begin{equation}\label{Fn}
r_{M_n}<\frac{4^{-n}}{2}\text{ and }\mu\left( F_n \right)\geq 1-2^{-n},
\end{equation}  
where 
\[
F_n\coloneqq\Bigl\{x\in\spt\mu\mid\sum_{k=M_n}^{N_n}\chi_{A_k}(x)b_k
\geq 2^{n}\Bigr\}.
\]
Clearly, $F_n$ is a Borel set. We now establish some notation. Set 
\[
\Sigma_n\coloneqq \{\textbf{i}=(i_{M_n},\ldots,i_{N_n})\in \{0,1 \}^{N_n-M_n+1} \}.
\]
For $\textbf{i}=(i_{M_n},\ldots,i_{N_n})\in \Sigma_n$, let 
\[
A_{n,\textbf{i}}\coloneqq\biggl(\bigcap_{\substack{j=M_n\\ i_j=1}}^{N_n}A_j\biggr)
\setminus\biggl( \bigcup_{\substack{j=M_n\\ i_j=0}}^{N_n}A_j \biggr) \text{ and }
c_{n,\textbf{i}}\coloneqq \sum_{\substack{j=M_n\\ i_j=1}}^{N_n}b_j.
\]

Set $\Sigma_{n}'\coloneqq\{\textbf{i}\in\Sigma_n\mid c_{n,\textbf{i}}\geq 2^{n}\}$.
Note that 
\[
\bigcup_{k=M_n}^{N_n}A_k=\bigcup_{\textbf{i}\in \Sigma_n}A_{n,\textbf{i}}
\]
with the right hand side union disjoint, and 
\begin{equation}\label{Fncover}
F_n=\bigcup_{\textbf{i}\in \Sigma_{n}'}A_{n,\textbf{i}}.
\end{equation}  

We will now consider the Borel probability measures
\begin{equation}\label{mun}
\mu_{n}\coloneqq \mu(F_n)^{-1}\sum_{\textbf{i}\in \Sigma_{n}'}\mu(A_{n,\textbf{i}})
a_{n,\textbf{i}}\sum_{j=M_n}^{N_n}b_{j}\,\mu_{|_{A_j\cap A_{n,\textbf{i}}}},
\end{equation}
where
\begin{equation}\label{ani}
a_{n,\textbf{i}}\coloneqq\Bigl(\sum_{j=M_n}^{N_n}b_{j}\mu(A_j\cap A_{n,\textbf{i}})
\Bigr)^{-1}
\end{equation}
provided that $\mu(A_{n,\textbf{i}})>0$. Observe that 
\[
\sum_{j=M_n}^{N_n}b_{j}\,\mu(A_j\cap A_{n,\textbf{i}})=
\int_{A_{n,\textbf{i}}}\sum_{j=M_n}^{N_n}b_{j}\,\chi_{A_j}(x)\,d\mu(x)
=c_{n,\textbf{i}}\,\mu(A_{n,\textbf{i}}),
\]
hence 
\begin{equation}\label{acrelation}
a_{n,\textbf{i}}=(c_{n,\textbf{i}}\mu(A_{n,\textbf{i}}))^{-1}
\end{equation}
for $\mu(A_{n,\textbf{i}})>0$. Thus, for any Borel set $B\subset\R^d$, we have that
\begin{align*}
\mu_n(B)&=\mu(F_n)^{-1}\sum_{\textbf{i}\in \Sigma_{n}'}\mu(A_{n,\textbf{i}})a_{n,\textbf{i}}
   \sum_{j=M_n}^{N_n}b_{j}\,\mu(A_j\cap A_{n,\textbf{i}}\cap B)\\
 &=\mu(F_n)^{-1}\sum_{\textbf{i}\in \Sigma_{n}'}\mu(A_{n,\textbf{i}})a_{n,\textbf{i}}
   \int_{A_{n,\textbf{i}}\cap B}\sum_{j=M_n}^{N_n}b_{j}\,\chi_{A_j}(x)\, d\mu(x)\\
 &=\mu(F_n)^{-1}\sum_{\textbf{i}\in \Sigma_{n}'}\mu(A_{n,\textbf{i}})a_{n,\textbf{i}}\,
   c_{n,\textbf{i}}\,\mu(A_{n,\textbf{i}}\cap B)\\
 &=\mu(F_n)^{-1}\sum_{\textbf{i}\in \Sigma_{n}'}\mu(A_{n,\textbf{i}}\cap B)
   =\mu(F_n)^{-1}\mu(F_n\cap B).
\end{align*}
Hence
\begin{equation}\label{munismuFn}
\mu_{n}=\mu_{F_n}.
\end{equation}
Since $\lim_{n\to\infty}\mu(F_n)=1$,
Lemma~\ref{lemma:conditiononalmostfullsets} implies that
$\mu_n\xrightarrow[n\to\infty]{w\text{-}\ast}\mu$ and
$I_t(\mu_{n})\xrightarrow[n\to\infty]{} I_t(\mu)$ (recall that $\mu$ is
$s$-Frostman, hence $I_t(\mu)<\infty$).

For all $n\in\N$, define a Borel function $E_n\colon\R^d\to\R$ by 
\begin{equation}\label{En}
E_n(x)\coloneqq \mu(F_n)^{-1}\sum_{\textbf{i}\in \Sigma_{n}'}\mu(A_{n,\textbf{i}})
a_{n,\textbf{i}}\sum_{j=M_n}^{N_n}b_{j}\int_{A_j\cap A_{n,\textbf{i}}}
\frac{\chi_{B(x,r_k)}(z)}{\mu(B(z,r_k))}\, d\mu(z).
\end{equation}
Using the fact $\chi_{B(x,r)}(z)=\chi_{B(z,r)}(x)$ and Fubini's
theorem, we conclude that 
\[
\int E_n(x)\, d\mu(x)=1,
\]
hence Markov's inequality yields
\[
\mu(\{x\in\spt\mu\mid E_n(x)\geq n^2 \})\leq n^{-2}.
\]
Fix $N\in \N$ large enough so that $\mu(G)>0$, where 
\begin{equation}\label{G}
    G\coloneqq \bigcap_{n=N}^\infty\{x\in\spt\mu\mid E_n(x)< n^{2} \}.
\end{equation}
Clearly, $G$ is a Borel set. Such an $N$ exists since 
\[
\mu(\R^d\setminus G)\leq \sum_{n=N}^{\infty}n^{-2}.
\]
By the Lebesgue's density theorem, 
\[
\lim_{r\downarrow 0}\frac{\mu(G\cap B(x,r))}{\mu(B(x,r))}=1
\]
for $\mu$-almost every $x\in G$. Thus, we can choose a decreasing sequence
$(\vep_n)_{n=1}^{\infty}$ of positive numbers tending to $0$ such that
\begin{equation}\label{Hnprop}
\mu(H_n)\geq (1-\frac{1}{n})\mu(G),
\end{equation}
where
\begin{equation}\label{Hn}
H_n:=\Bigl\{x\in G\mid \frac{\mu(G\cap B(x,r))}{\mu(B(x,r))}\geq 1-\frac{1}{n}
\text{ for all } 0<r<\vep_n\Bigr\}. 
\end{equation}
For all $n\in\N$, we write 
\[
q_n\coloneqq\min\bigl\{n,\max\{k\in\N\mid\vep_k>r_{M_n}\}\bigr\}.
\]
Then
\begin{equation}\label{qnprop}
\lim_{n\to\infty}q_n=\infty\text{ and }r_k<\vep_{q_n}\text{ for all }
k\in\{ M_n,\ldots,N_n\}.
\end{equation}

Our goal is to construct, for almost every $\omega \in \Omega$, a measure
supported on $E_{\underline{r}}(\omega)$  having finite $t$-energy. To this end,
for each $\omega \in \Omega$ and $k\in\N$, write
$B_k^{\omega}\coloneqq B(\omega_k,r_k)$ and let $U_{k}^{\omega}\subset B_k^{\omega}$
be the smallest closed ball centred at $\omega_k$ such that
$\mu(G\cap U_k^{\omega})\geq \frac{\mu(G\cap B_k^{\omega})}{2}$. Since $B_k^{\omega}$
is open, $\dist(U_k^{\omega},\R^d\setminus B_k^{\omega})>0$. Let $r(\omega_k)$ be
the radius of $U_k^\omega$. Note that
$\lim_{\delta\to 0}\mu(G\cap B(y,\rho-\delta))=\mu(G\cap B(y,\rho))$ for all
$y\in\R^d$ and $\rho>0$. Fix $y\in\R^d$, $\vep>0$ and $k\in\N$. Let
$\lim_{i\to\infty}y_i=y$ and denote by $\overline B(z,\rho)$ the closed ball
centred at $z$ with radius $\rho$. There exists $\delta>0$ such that, for all
large $i\in\N$, we have by the definition of $r(y)$ that
\begin{align*}
\mu(G\cap\overline B(y_i,r(y)-2\vep))&\le\mu(G\cap\overline B(y,r(y)-\vep))
   <\frac{\mu(G\cap B(y,r_k-\delta))}2\\
 &\le\frac{\mu(G\cap B(y_i,r_k))}2.
\end{align*}  
Thus, $r(y_i)\ge r(y)-2\vep$ for all large $i\in\N$, which implies that $r$ is
lower semicontinuous. Therefore, $(\omega,x)\mapsto\chi_{B_k^{\omega}}(x)$ and
$(\omega,x)\mapsto\chi_{U_k^{\omega}}(x)=\chi_{\overline B(\omega_k,r(\omega_k))}(x)$
are Borel maps and we may choose a continuous function $\tilde{\psi}_{k}^{\omega}$
such that
\begin{equation}\label{tildepsi}
\chi_{U_k^{\omega}}\leq \tilde{\psi}_{k}^{\omega}\leq \chi_{B_k^{\omega}}
\end{equation}  
and $(\omega,x)\mapsto\tilde\psi_k^\omega(x)$ is a Borel map. For example, we may
interpolate linearly between the boundaries of $U_k^{\omega}$ and $B_k^{\omega}$.  
Finally, for all $k\in\N$ and $\omega\in\Omega$ such that
$\mu(G\cap B_k^{\omega})>0$, let 
\begin{equation}\label{equation:defnofpsi_k^omega}
\psi_{k}^{\omega}:=c_k^{\omega}\frac{\Tilde{\psi}_k^{\omega}}{\mu(G\cap B_k^{\omega})},
\end{equation}
where $c_{k}^{\omega}\leq 2$ is such that $\psi_{k}^{\omega}\, d\mu_{|_{G}}$ is a
Borel probability measure. Note that $\omega\mapsto c_k^\omega$ is a Borel
function.

We will now consider the Borel measures $\varphi_{n}^{\omega}\, d\mu_{|_{G}}$, where
\begin{equation}\label{varphinomega}
\varphi_{n}^{\omega}:=\mu(F_n)^{-1}\sum_{\textbf{i}\in \Sigma'_{n}}\mu(A_{n,\textbf{i}})
a_{n,\textbf{i}}\sum_{k=M_n}^{N_n}b_{k}\,\chi_{A_k\cap A_{n,\textbf{i}}\cap H_{q_n}}(\omega_k)
\psi_{k}^{\omega}.
\end{equation}  
We require the following lemma.
\end{proof}

\begin{sublemma}\label{sublemma}
Let
\[
\mu_{n}^{\omega}\coloneqq\mu(F_n)^{-1}\sum_{\bf{i}\in \Sigma'_{n}} \mu(A_{n,\bf{i}})
a_{n,\bf{i}}\sum_{k=M_n}^{N_n}b_{k}\,\chi_{A_k\cap A_{n,\bf{i}}\cap H_{q_n}}(\omega_k)
\delta_{\omega_k},
\]
where $\delta_{\omega_k}$ is the Dirac measure at $\omega_k$. Then
$\mu_{n}^{\omega}\xrightarrow[n\to\infty]{w\text{-}\ast}\mu_{|_{G}}$
almost surely.
\end{sublemma}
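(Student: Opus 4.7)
The plan is to verify weak-$\ast$ convergence of $\mu_n^\omega$ to $\mu_{|_G}$ by reducing to convergence of $\int f\,d\mu_n^\omega$ for each fixed continuous $f$ via Chebyshev and the Borel--Cantelli lemma, then extending to all test functions by separability. Since $\spt\mu$ is compact and each $\mu_n^\omega$ is supported in $\{\omega_{M_n},\ldots,\omega_{N_n}\}\subset\spt\mu$, it suffices to show that $\int f\,d\mu_n^\omega\to\int f\,d\mu_{|_G}$ almost surely for every $f\in C(\spt\mu)$.

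The first step is the expectation computation. Writing $\int f\,d\mu_n^\omega=\sum_{k=M_n}^{N_n}g_{n,k}(\omega_k)$ with $g_{n,k}$ read off from the definition of $\mu_n^\omega$, the independence and identical distribution of the $\omega_k$'s, together with Fubini, give
\[
\mathbb{E}\Bigl[\int f\,d\mu_n^\omega\Bigr]=\mu(F_n)^{-1}\sum_{\mathbf{i}\in\Sigma'_n}\mu(A_{n,\mathbf{i}})\,a_{n,\mathbf{i}}\sum_{k=M_n}^{N_n}b_k\int_{A_k\cap A_{n,\mathbf{i}}\cap H_{q_n}}f\,d\mu.
\]
Since $A_{n,\mathbf{i}}\subset A_k$ when $i_k=1$ and $A_{n,\mathbf{i}}\cap A_k=\emptyset$ when $i_k=0$, the inner sum collapses to $c_{n,\mathbf{i}}\int_{A_{n,\mathbf{i}}\cap H_{q_n}}f\,d\mu$. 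Using \eqref{acrelation} to obtain $a_{n,\mathbf{i}}\mu(A_{n,\mathbf{i}})c_{n,\mathbf{i}}=1$ and the disjoint decomposition \eqref{Fncover}, this reduces to $\mu(F_n)^{-1}\int_{F_n\cap H_{q_n}}f\,d\mu$, which converges to $\int f\,d\mu_{|_G}$ since $\mu(F_n)\to 1$, $H_{q_n}\subset G$, and $\mu(H_{q_n})\to\mu(G)$.

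The second step is a variance bound exploiting the independence of $\omega_{M_n},\ldots,\omega_{N_n}$. By disjointness of $\{A_{n,\mathbf{i}}\}_{\mathbf{i}\in\Sigma_n}$, for any $z\in\spt\mu$ there is at most one $\mathbf{i}^*$ with $z\in A_{n,\mathbf{i}^*}$, and one checks that $g_{n,k}(z)=\mu(F_n)^{-1}b_k c_{n,\mathbf{i}^*}^{-1}f(z)\chi_{H_{q_n}}(z)$ when $\mathbf{i}^*\in\Sigma'_n$ and $i^*_k=1$, and vanishes otherwise. Combining the hypothesis that $(b_k)$ is bounded, the defining inequality $c_{n,\mathbf{i}}\geq 2^n$ for $\mathbf{i}\in\Sigma'_n$, and $\mu(F_n)\geq 1/2$ for large $n$, one gets $|g_{n,k}(z)|\leq C\|f\|_\infty 2^{-n}$. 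Independence then yields
\[
\mathrm{Var}\Bigl(\int f\,d\mu_n^\omega\Bigr)\leq\sum_{k=M_n}^{N_n}\mathbb{E}[g_{n,k}(\omega_k)^2]\leq C\|f\|_\infty 2^{-n}\sum_{k=M_n}^{N_n}\mathbb{E}[|g_{n,k}(\omega_k)|]\leq C'\|f\|_\infty^2\,2^{-n},
\]
where the last step reuses the expectation formula above applied to $|f|$.

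Since $\sum_n 2^{-n}<\infty$, Chebyshev's inequality together with the first Borel--Cantelli lemma give, for each fixed $f\in C(\spt\mu)$, that $\int f\,d\mu_n^\omega\to\int f\,d\mu_{|_G}$ almost surely. Picking a countable dense subset $\{f_j\}\subset C(\spt\mu)$, intersecting the corresponding full-measure events, and extending by uniform approximation together with the uniform bound on $\mu_n^\omega(\spt\mu)$ (obtained as the special case $f\equiv 1$) yields almost sure weak-$\ast$ convergence. The main technical subtlety is the variance estimate; its crucial ingredients are the threshold $c_{n,\mathbf{i}}\geq 2^n$ for $\mathbf{i}\in\Sigma'_n$, the boundedness hypothesis on $(b_k)$, and the disjointness of the sets $A_{n,\mathbf{i}}$.
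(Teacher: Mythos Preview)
Your proof is correct and follows essentially the same route as the paper: compute the expectation of $\int f\,d\mu_n^\omega$ and show it converges to $\int f\,d\mu_{|_G}$, bound the variance by a summable sequence using independence, disjointness of the $A_{n,\mathbf{i}}$, the threshold $c_{n,\mathbf{i}}\ge 2^n$, and boundedness of $(b_k)$, then conclude via Chebyshev, Borel--Cantelli, and separability. The only cosmetic difference is that you extract the pointwise bound $|g_{n,k}(z)|\le C\,2^{-n}\|f\|_\infty$ first and then recycle the expectation computation, whereas the paper expands the square directly and simplifies using disjointness; both arrive at the same $O(2^{-n})$ variance bound.
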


\begin{proof}
Let $h\in C(\R^d)$ with compact support. Since
$\lim_{n\to\infty}\mu(F_n)=1$ by \eqref{Fn} and $H_{q_n}\cap F_n\subset G$ by
\eqref{Hn}, we have that $\lim_{n\to\infty}\mu(H_{q_n}\cap F_n)=\mu(G)$ by
\eqref{Hnprop} and \eqref{qnprop}. Recalling that $\mu_n=\mu_{F_n}$ (see
\eqref{munismuFn}) and writing $\nu(h):=\int h\,d\nu$ for a Borel measure $\nu$,
we obtain
\begin{align*}
\mathbb{E}(\mu_{n}^{\omega}(h))&=\mu(F_n)^{-1}\sum_{\textbf{i}\in\Sigma'_{n}}
   \mu(A_{n,\textbf{i}})a_{n,\textbf{i}}\sum_{k=M_n}^{N_n}b_{k}\,
   \int_{A_k\cap A_{n,\textbf{i}}\cap H_{q_n}}h(\omega_k)\,d\mu(\omega_k)\\
 &=\int_{H_{q_n}}\,d\mu_n=\mu(F_n)^{-1}\int_{H_{q_n}}h\,d\mu_{|_{F_n}}
   =\mu(F_n)^{-1}\int h\,d\mu_{|_{H_{q_n}\cap F_n}}\\
 &\xrightarrow[n\to\infty]{}\mu_{|_G}(h).
\end{align*}
Set $b:=\sup_{k\in\N}b_k$. Recalling \eqref{Fncover}, disjointedness of
the sets $A_{n,\textbf{i}}$, \eqref{acrelation}, the definition of $\Sigma_n'$,
\eqref{mun} and \eqref{munismuFn},
we estimate the variance
\begin{align*}
&\mathrm{Var}(\mu_{n}^{\omega}(h))\\
&=\mathrm{Var}\biggl(\mu(F_n)^{-1}\sum_{k=M_n}^{N_n}b_{k}\,\sum_{\textbf{i}\in\Sigma'_{n}}
  \mu(A_{n,\textbf{i}})a_{n,\textbf{i}} \chi_{A_k\cap A_{n,\textbf{i}}\cap H_{q_n}}(\omega_k)
  h(\omega_k)\biggr)\\
&=\mu(F_n)^{-2}\sum_{k=M_n}^{N_n}b_{k}^{2}\,\mathrm{Var}\biggl(
  \sum_{\textbf{i}\in \Sigma'_{n}}\mu(A_{n,\textbf{i}})a_{n,\textbf{i}}
  \chi_{A_k\cap A_{n,\textbf{i}}\cap H_{q_n}}(\omega_k) h(\omega_k)\biggr)\\
&\leq \Vert h\Vert_{\infty}^{2}\mu(F_n)^{-2}\sum_{k=M_n}^{N_n}b_{k}^{2}\, \mathbb{E}
  \biggl(\Bigl(\sum_{\textbf{i}\in \Sigma'_{n}} \mu(A_{n,\textbf{i}})a_{n,\textbf{i}}
  \chi_{A_k\cap A_{n,\textbf{i}}}(\omega_k)\Bigr)^2\biggr)\\
&=\Vert h\Vert_{\infty}^{2}\mu(F_n)^{-2}\sum_{k=M_n}^{N_n}b_{k}^{2}\,
  \sum_{\textbf{j}\in \Sigma'_{n}}\int_{A_{n,\textbf{j}}}\biggl(\sum_{\textbf{i}\in \Sigma'_{n}}
  \mu(A_{n,\textbf{i}})a_{n,\textbf{i}} \chi_{A_k\cap A_{n,\textbf{i}}}(\omega_k)\biggr)^2\,
  d\mu(\omega_k)\\
&=\Vert h\Vert_{\infty}^{2}\mu(F_n)^{-2}\sum_{k=M_n}^{N_n}b_{k}^{2}\,
  \sum_{\textbf{j}\in \Sigma'_{n}}\int_{A_{n,\textbf{j}}}\mu(A_{n,\textbf{j}})^{2}
  a_{n,\textbf{j}}^{2} \chi_{A_k\cap A_{n,\textbf{j}}}(\omega_k)\, d\mu(\omega_k)\\
&=\Vert h\Vert_{\infty}^{2}\mu(F_n)^{-2}\sum_{\textbf{j}\in \Sigma'_{n}}
  \mu(A_{n,\textbf{j}})^{2}a_{n,\textbf{j}}^{2}\sum_{k=M_n}^{N_n}b_{k}^{2}\,
  \mu(A_k\cap A_{n,\textbf{j}})\\
&=\Vert h\Vert_{\infty}^{2}\mu(F_n)^{-2}\sum_{\textbf{j}\in \Sigma'_{n}}\mu(A_{n,\textbf{j}})
  c_{n,\textbf{j}}^{-1}a_{n,\textbf{j}}\sum_{k=M_n}^{N_n}b_{k}^{2}\,
  \mu(A_k\cap A_{n,\textbf{j}})\\
&\leq\Vert h\Vert_{\infty}^{2}b2^{-n}\mu(F_n)^{-2}
  \sum_{\textbf{j}\in \Sigma'_{n}}\mu(A_{n,\textbf{j}})a_{n,\textbf{j}}\sum_{k=M_n}^{N_n}b_{k}\,
  \mu(A_k\cap A_{n,\textbf{j}})\\
&=\Vert h\Vert_{\infty}^{2}b2^{-n}\mu(F_n)^{-1}.
\end{align*}
Thus 
\[
\sum_{n=1}^\infty\mathrm{Var}(\mu_{n}^{\omega}(h))<\infty.
\]
Given $\delta>0$, we have by Chebyshev's inequality that 
\[
\sum_{n=1}^\infty\Pb\left\{|\mu_{n}^{\omega}(h)-\mu_{|_G}(h) |\geq\delta\right\}
\le\sum_{n=1}^\infty\delta^{-2}\mathrm{Var}(\mu_{n}^{\omega}(h))<\infty.
\]
Hence, by Borel-Cantelli lemma, 
\[
\limsup_{n\to \infty}|\mu_{n}^{\omega}(h)-\mu_{|_G}(h) |\leq \delta
\]
almost surely and, thus,
$\lim_{n\to\infty}\mu_{n}^{\omega}(h)=\mu_{|_G}(h)$ almost surely. Since
$C_0(\R^d)$ is separable,
$\mu_{n}^{\omega}\xrightarrow[n\to\infty]{w\text{-}\ast}\mu_{|_{G}}$ almost surely.
This completes the proof of Sublemma~\ref{sublemma}.  
\end{proof}

\begin{proof}[Proof of Theorem \ref{theorem:generallowerboundenergy} continued]
Since $\mu_{n}^{\omega}\xrightarrow[n\to\infty]{w\text{-}\ast}\mu_{|_G}$
almost surely, $\lim_{k\to\infty}r_k=0$ and $\psi_{k}^{\omega}\, d\mu_{|_{G}}$ is a
probability measure for $\omega_k\in H_{q_n}$ (see
\eqref{equation:defnofpsi_k^omega}), we have that
$\varphi_{n}^{\omega}\,d\mu_{|_{G}}\xrightarrow[n\to\infty]{w\text{-}\ast}\mu_{|_G}$
almost surely. Let $\rho\in C_0(\R^d)$ be non-negative. In order to
apply Lemma~\ref{lemma:deterministiclowerbound}, we will estimate the
$t$-energies of the random measures $\varphi_{n}^{\omega}\rho\,d\mu_{|_{G}}$. 

Observe first that 
\begin{equation}
\mathbb{E}\left( I_t(\varphi_{n}^{\omega}\rho\,d\mu_{|_{G}})\right)=S_1+S_2,
\end{equation}
where
\begin{align*}
S_1&:=\mu(F_n)^{-2}\sum_{k=M_n}^{N_n}b_{k}^{2}\,\sum_{\textbf{i},\textbf{j}\in \Sigma_{n}'}
   \mu(A_{n,\textbf{i}})\mu(A_{n,\textbf{j}})a_{n,\textbf{i}}a_{n,\textbf{j}}\\
 &\phantom{:=\mu(F_n)^{-2}}\times \mathbb{E}\left(
   \chi_{A_k\cap A_{n,\textbf{i}}\cap H_{q_n}}(\omega_k)\chi_{A_k\cap A_{n,\textbf{j}}\cap H_{q_n}}
   (\omega_k) I_t(\psi_{k}^{\omega}\rho\,d\mu_{|_{G}})\right)\text{ and}\\
S_2&:=\mu(F_n)^{-2}\sum_{\substack{l,m=M_n\\ l\neq m}}^{N_n}b_{l}\,b_{m}\,
   \sum_{\textbf{i},\textbf{j}\in \Sigma_{n}'}\mu(A_{n,\textbf{i}})\mu(A_{n,\textbf{j}})
   a_{n,\textbf{i}}a_{n,\textbf{j}}\\
 &\phantom{:=\mu(F_n)^{-2}}\times\mathbb{E}\left(
   \chi_{A_l\cap A_{n,\textbf{i}}\cap H_{q_n}}(\omega_l)\chi_{A_m\cap A_{n,\textbf{j}}\cap H_{q_n}}
   (\omega_m) J_t(\psi_{l}^{\omega}\rho\, d\mu_{|_{G}},\psi_{m}^{\omega}\rho\,
   d\mu_{|_{G}})\right).\\
\end{align*}
We first estimate $S_1$. By \eqref{equation:defnofpsi_k^omega}, we have for
$\omega_k\in A_k\cap H_{q_n}$ that
\[
I_t(\psi_{k}^{\omega}\rho\, d\mu_{|_{G}})\leq 4\Vert \rho\Vert_{\infty}^{2}
I_t\Bigl(\frac{\chi_{B_k^{\omega}}}{\mu(G\cap B_k^{\omega})}\, d\mu_{|_G}\Bigr)
\leq 4\Vert\rho\Vert_{\infty}^{2}\Bigl(1-\frac{1}{q_n}\Bigr)^{-2} b_k^{-1}.
\]
Thus, using the fact that the sets $A_{n,\textbf{i}}$ are disjoint,
\eqref{acrelation}, \eqref{ani} and \eqref{Fncover}, we obtain
\begin{align*}
S_1&\leq 4\Vert \rho\Vert_{\infty}^{2}\Bigl(1-\frac{1}{q_n}\Bigr)^{-2}\mu(F_n)^{-2}
   \sum_{k=M_n}^{N_n}b_{k}^{2}\,\sum_{\textbf{i},\textbf{j}\in \Sigma_{n}'}\mu(A_{n,\textbf{i}})
   \mu(A_{n,\textbf{j}})a_{n,\textbf{i}}a_{n,\textbf{j}}\\
 &\phantom{\le 4}\times \mathbb{E}\left( \chi_{A_k\cap A_{n,\textbf{i}}}(\omega_k)
   \chi_{A_k\cap A_{n,\textbf{j}}}(\omega_k) b_k^{-1}\right) \\
 &=4\Vert \rho\Vert_{\infty}^{2}\Bigl(1-\frac{1}{q_n}\Bigr)^{-2}\mu(F_n)^{-2}
   \sum_{k=M_n}^{N_n}b_{k}\,\sum_{\textbf{i}\in \Sigma_{n}'}\mu(A_{n,\textbf{i}})^{2}
   a_{n,\textbf{i}}^{2}\,\mu\left( A_k\cap A_{n,\textbf{i}}\right)\\
 &=4\Vert \rho\Vert_{\infty}^{2}\Bigl(1-\frac{1}{q_n}\Bigr)^{-2}\mu(F_n)^{-2}
   \sum_{\textbf{i}\in \Sigma_{n}'}\mu(A_{n,\textbf{i}})c_{n,\textbf{i}}^{-1}
   a_{n,\textbf{i}}\sum_{k=M_n}^{N_n}b_{k}\,\mu\left( A_k\cap A_{n,\textbf{i}}\right)\\
   &\leq 4\Vert \rho\Vert_{\infty}^{2}\Bigl(1-\frac{1}{q_n}\Bigr)^{-2}\mu(F_n)^{-1}
   2^{-n}\xrightarrow[n\to\infty]{} 0.
\end{align*}
To estimate $S_2$, let $\delta>0$ and let $n\in \N$ be large enough so that 
\begin{equation}\label{equation:boundsforcontinuousrho}
    \rho(x)\rho(y)\leq \rho(u)\rho(v)+\delta
\end{equation}
whenever $x,y,u,v\in X$ are such that $\max\{ |x-u|,|y-v| \}\leq r_{M_n}$.
Let $\beta:=2^{-n}$ so that $r_{M_n}<\frac{\beta^2}{2}$ (recall \eqref{Fn}).
We now write
\begin{align*}
\mathbb{E}&\left( \chi_{A_l\cap A_{n,\textbf{i}}\cap H_{q_n}}(\omega_l)
   \chi_{A_m\cap A_{n,\textbf{j}}\cap H_{q_n}}(\omega_m)J_t(\psi_{l}^{\omega}\rho\,d\mu_{|_G},
   \psi_{m}^{\omega}\rho\, d\mu_{|_G})\right)\\
 &= E_1(l,m,\textbf{i},\textbf{j})+E_2(l,m,\textbf{i},\textbf{j}),
\end{align*}
where 
\begin{align}
E_1(l,m,\textbf{i},\textbf{j})&:=\int\int_{|\omega_{l}-\omega_{m}|\geq \beta}
   \chi_{A_l\cap A_{n,\textbf{i}}\cap H_{q_n}}(\omega_l)
   \chi_{A_m\cap A_{n,\textbf{j}}\cap H_{q_n}}(\omega_m)\\
 &\phantom{tahanvahantilaa}\times J_t(\psi_{l}^{\omega}\rho\, d\mu_{|_{G}},
   \psi_{m}^{\omega}\rho\,d\mu_{|_{G}})\,d\mu(\omega_l)\,d\mu(\omega_m)\,\text{ and}
   \nonumber\\
E_2(l,m,\textbf{i},\textbf{j})&:=\int\int_{|\omega_{l}-\omega_{m}|< \beta}
   \chi_{A_l\cap A_{n,\textbf{i}}\cap H_{q_n}}(\omega_l)
   \chi_{A_m\cap A_{n,\textbf{j}}\cap H_{q_n}}(\omega_m)\\
 &\phantom{tahanvahantilaa}\times J_t(\psi_{l}^{\omega}\rho\, d\mu_{|_{G}},
   \psi_{m}^{\omega}\rho\,d\mu_{|_{G}})\,d\mu(\omega_l)\,d\mu(\omega_m).\nonumber
\end{align}

We begin by estimating $E_1(l,m,\textbf{i},\textbf{j})$ for $l\neq m$ (or more
precisely, the contribution of the $E_1$-terms to $S_2$). Observe that if 
$|\omega_{l}-\omega_{m}|\geq \beta$, then 
\[|x-y|\geq (1-\beta)|\omega_{l}-\omega_{m}|
\]
for $x\in B_{l}^{\omega}$ and $y\in B_{m}^{\omega}$. 
Thus, recalling \eqref{equation:defnofpsi_k^omega},
\eqref{equation:boundsforcontinuousrho}, the facts that $H_{q_n}\subset G$ and
$\psi_k^\omega\,d\mu_{|_G}$ is a probability measure and \eqref{munismuFn}, we
obtain  
\begin{align}\label{equation:energycontributionfromE_1terms}
\mu&(F_n)^{-2}\sum_{\substack{l,m=M_n\\ l\neq m}}^{N_n}b_{l}\,b_{m}\,
   \sum_{\textbf{i},\textbf{j}\in \Sigma_{n}'}\mu(A_{n,\textbf{i}})\mu(A_{n,\textbf{j}})
   a_{n,\textbf{i}}a_{n,\textbf{j}} E_1(l,m,\textbf{i},\textbf{j})\nonumber\\
 &\leq (1-\beta)^{-t}\mu(F_n)^{-2}\sum_{\substack{l,m=M_n\\ l\neq m}}^{N_n}b_{l}\,b_{m}\,
   \sum_{\textbf{i},\textbf{j}\in \Sigma_{n}'}\mu(A_{n,\textbf{i}})\mu(A_{n,\textbf{j}})
   a_{n,\textbf{i}}a_{n,\textbf{j}}\nonumber\\
 &\phantom{\le}\times\int\int_{|\omega_{l}-\omega_{m}|\geq \beta}
   \chi_{A_l\cap A_{n,\textbf{i}}\cap H_{q_n}}(\omega_l)\chi_{A_m\cap A_{n,\textbf{j}}\cap H_{q_n}}
   (\omega_m)|\omega_l-\omega_m|^{-t}\nonumber\\
 &\phantom{\le}\times(\rho(\omega_l)\rho(\omega_m)+\delta)\int\int
   \psi_{l}^{\omega}(x)\psi_{m}^{\omega}(y) \,d\mu_{|_G}(x)\,d\mu_{|_G}(y)\,
   d\mu(\omega_l)\,d\mu(\omega_m)\nonumber\\
 &\leq (1-\beta)^{-t}\mu(F_n)^{-2}\sum_{l,m=M_n}^{N_n}b_{l}\,b_{m}\,
   \sum_{\textbf{i},\textbf{j}\in \Sigma_{n}'}\mu(A_{n,\textbf{i}})\mu(A_{n,\textbf{j}})
   a_{n,\textbf{i}}a_{n,\textbf{j}}\nonumber\\
 &\phantom{\le}\times\int_{G}\int_{G}\chi_{A_l\cap A_{n,\textbf{i}}}(\omega_l)
   \chi_{A_m\cap A_{n,\textbf{j}}}(\omega_m)  
   |\omega_l-\omega_m|^{-t} (\rho(\omega_l)\rho(\omega_m)+\delta)
   \,d\mu(\omega_l)\,d\mu(\omega_m)\nonumber\\
 &=(1-\beta)^{-t}\mu(F_n)^{-2}\int_{G}\int_{G} |z-v|^{-t}(\rho(z)\rho(v)+\delta)\,
   d\mu_{|_{F_n}}(z)\, d\mu_{|_{F_n}}(v)\nonumber\\
 &\leq (1-\beta)^{-t}\mu(F_n)^{-2}(I_t(\rho \,d\,\mu_{|_G})+\delta I_t(\mu_{|_G}) ).
\end{align}

Next we estimate the $E_2$-terms. To this end, for $x\in G$, let
\[
\widetilde{E}_{n}(x)\coloneqq \mu(F_n)^{-1}\sum_{l=M_n}^{N_n}b_{l}\,
\sum_{\textbf{i}\in \Sigma_{n}'}\mu(A_{n,\textbf{i}})a_{n,\textbf{i}}
\int_{A_l\cap A_{n,\textbf{i}}\cap H_{q_n}}\psi_{l}^{\omega}(x)\, d\mu(\omega_l). 
\]
Note that $\widetilde{E}_{n}(x)$ is a Borel function. Recalling
\eqref{equation:defnofpsi_k^omega}, \eqref{Hn}, \eqref{En}, \eqref{G} and the
fact $\chi_{B(\omega_l,r_l)}(x)=\chi_{B(x,r_l)(\omega_l)}$, we estimate, for $x\in G$
and $n\geq N$,
\begin{align}\label{equation:estimatefore_ntilde}
\widetilde{E}_{n}(x)&\leq 2\Bigl(1-\frac{1}{q_n}\Bigr)^{-1}\mu(F_n)^{-1}
   \sum_{l=M_n}^{N_n}b_{l}\,\sum_{\textbf{i}\in \Sigma_{n}'}\mu(A_{n,\textbf{i}})
   a_{n,\textbf{i}}\int_{A_l\cap A_{n,\textbf{i}}\cap H_{q_n}} \frac{\chi_{B_l^{\omega}}(x)}
   {\mu(B_l^{\omega})}\, d\mu(\omega_l)\nonumber\\
 &\leq 2\Bigl(1-\frac{1}{q_n}\Bigr)^{-1}E_n(x)\leq 2\Bigl(1-\frac{1}{q_n}
   \Bigr)^{-1}n^2.
\end{align}
Observe that if $|\omega_l-\omega_m|<\beta$, $x\in B_l^{\omega}$ and
$y\in B_m^{\omega}$, then $|x-y|<2\beta$, since $r_{M_n}<\beta^2/2$ and the
sequence $(r_k)_{k=1}^\infty$ is decreasing. Thus, by
\eqref{equation:estimatefore_ntilde}) and
Lemma~\ref{lemma:energyupperboundfrostmanmeasures},
\begin{align}\label{equation:energycontributionfromE_2terms}
\mu&(F_n)^{-2}\sum_{\substack{l,m=M_n\\ l\neq m}}^{N_n}b_{l}\,b_{m}\,
    \sum_{\textbf{i},\textbf{j}\in \Sigma_{n}'}\mu(A_{n,\textbf{i}})\mu(A_{n,\textbf{j}})
    a_{n,\textbf{i}}a_{n,\textbf{j}} E_2(l,m,\textbf{i},\textbf{j})\nonumber\\
 &\leq \Vert\rho\Vert_{\infty}^{2}\mu(F_n)^{-2}\sum_{\substack{l,m=M_n\\ l\neq m}}^{N_n}
    b_{l}\,b_{m}\,\sum_{\textbf{i},\textbf{j}\in \Sigma_{n}'}\mu(A_{n,\textbf{i}})
    \mu(A_{n,\textbf{j}})a_{n,\textbf{i}}a_{n,\textbf{j}}\nonumber\\
 &\phantom{\le 2}\times\int\int_{|\omega_{l}-\omega_{m}|< \beta}
    \chi_{A_l\cap A_{n,\textbf{i}}\cap H_{q_n}}(\omega_l)\chi_{A_m\cap A_{n,\textbf{j}}\cap H_{q_n}}
    (\omega_m)  \nonumber\\
 &\phantom{\le 2}\times\int \int_{|x-y|<2\beta} |x-y|^{-t} \psi_{l}^{\omega}(x)
    \psi_m^{\omega}(y)\,d\mu_{|_{G}}(x)\,d\mu_{|_{G}}(y)\,d\mu(\omega_l)\,
    d\mu(\omega_m)\nonumber\\
 &\leq \Vert\rho\Vert_{\infty}^{2}\int\int_{|x-y|<2\beta} |x-y|^{-t}
    \widetilde{E}_n(x)\widetilde{E}_n(y)\,d\mu_{|_{G}}(x)\,
    d\mu_{|_{G}}(y)\nonumber\\
 &\leq 4 \Vert \rho\Vert_{\infty}^{2}\Bigl(1-\frac{1}{q_n}\Bigr)^{-2}
    n^4\int \int_{|x-y|<2\beta} |x-y|^{-t}\,d\mu(x)\,d\mu(y)\nonumber\\
 &\leq C_{s,t} \Vert \rho\Vert_{\infty}^{2} n^4\beta^{s-t}.
  \end{align}
Recalling that $\beta=2^{-n}$, we obtain by combining the estimates from
\eqref{equation:energycontributionfromE_1terms},
\eqref{equation:energycontributionfromE_2terms} and \eqref{Fn} that 
\[
S_2\leq (1-2^{-n})^{-t}\mu(F_n)^{-2}(I_t(\rho \,d\,\mu_{|_G})+\delta I_t(\mu_{|_G}))
+C_{s,t}\Vert \rho\Vert_{\infty}^{2}n^4\,2^{-n(s-t)}.
\]
Letting $\delta\to 0$  (and thus also $n\to \infty$) yields
\begin{equation}\label{equation:claimforcontinuousfunctions}
\limsup_{n\to \infty}  \mathbb{E}\left( I_t(\varphi_{n}^{\omega}\rho\,d\mu_{|_{G}})
\right)\leq I_t(\rho\,d\mu_{|_G}).
\end{equation}
Thus, by Lemma~\ref{lemma:liminfatmostexpectation},
\[
\liminf_{n\to\infty}I_t(\varphi^{\omega}_{n}\rho\,d\mu_{|_{G}})\le I_t(\rho\,d\mu_{|_G})
\]
almost surely. Since $\rho\in C_0(\R^d)$ was arbitrary, we obtain
that 
\[
\mathbb{P}\left(\Omega_0\right)\coloneqq\mathbb{P}\bigl(\{\liminf_{n\to \infty}
I_t(\varphi_{n}^{\omega}\rho_i\,d\mu_{|_{G}})\leq I_t(\rho_{i}\,d\mu_{|_G})
\text{ for all } i\in \N\}\bigr)=1,
\]
where $\{ \rho_i\}_{i\in \N}$ is a dense subset of the separable space
$C_0(\R^d)$ with the property that $\rho_j+q\in\{\rho_i\}_{i\in\N}$
for every $j\in\N$ and for every non-negative $q\in\Q$. Let
$\omega\in \Omega_0$, $\varepsilon_{0}>0$ and let $\rho$ be any finite product
of the functions $\{\varphi_n^{\omega}\}_{n\in\N}$. Then there is
$i\in\N$ such that $\rho_i\geq \rho$ and
$\Vert \rho-\rho_i \Vert_{\infty}<\varepsilon_{0}$. Thus,
\begin{align*}
\liminf_{n\to\infty}I_t(\varphi_n^{\omega}\rho\,d\mu_{|_{G}})&\leq \liminf_{n\to\infty}
    I_t(\varphi_n^{\omega}\rho_i\,d\mu_{|_{G}})\leq I_t(\rho_i\,d\mu_{|_G})\\
 &\leq I_t(\rho\,d\mu_{|_G})+ (2\varepsilon_{0}\Vert \rho\Vert_{\infty}
    +\varepsilon_{0}^2)I_t(\mu_{|_G}).
\end{align*}
Since $\varepsilon_{0}>0$ was arbitrary and $I_t(\mu_{|_G})<\infty$, we obtain
that 
\[
\liminf_{n\to\infty}I_t(\varphi_n^{\omega}\rho\,d\mu_{|_{G}})\leq I_t(\rho\,d\mu_{|_G}).
\]
Since $\omega\in \Omega_0$ was arbitrary, we have that the assumptions of
Lemma~\ref{lemma:deterministiclowerbound} are satisfied almost surely.
By \eqref{varphinomega} and \eqref{equation:defnofpsi_k^omega}, we have
that $\spt\varphi_n^\omega\subset\bigcup_{k=M_n}^{N_n}B(\omega_k,r_k)$, implying
that $E_{\underline r}(\omega)\supset\limsup_{n\to\infty}(\spt\varphi_n^\omega)$.
Hence, by Lemma~\ref{lemma:deterministiclowerbound},
\[
\capa_{t}\left(E_{\underline{r}}(\omega)\right)\geq\frac{\mu(G)^{2}}{I_t(\mu_{|_G})}>0
\]
almost surely, which implies that
$\dim_{\mathrm{H}}\left(E_{\underline{r}}(\omega) \right)\geq t$ almost surely.
\end{proof}

The following corollary of Theorem~\ref{theorem:generallowerboundenergy} will
be convenient for us in the proofs of Theorem~\ref{theorem:nminusalpha} and
Theorem~\ref{theorem:lowerboundforgeneralballs}.

\begin{corollary}\label{corollary:generallowerboundmeasure}
Let $\mu\in\mathcal{P}(\R^d)$ be $(C,s)$-Frostman. Let
$(r_k)_{k=1}^{\infty}$ be a decreasing sequence of positive numbers tending to
zero. For $u>0$, let 
\begin{equation}\label{Aku}
\widehat A_k^{u}:=\left\{x\in\spt\mu\mid\mu(B(x,r_k))\geq r_k^{u}\right\}.
\end{equation}
If for some $u>0$ and $0<t<s$ it is true that
\begin{equation}\label{equation:measureassumptionforlowerbound}
\mu\Bigl(\bigl\{ x\in\spt\mu\mid\sum_{k=1}^\infty\chi_{\widehat A_k^{u}}(x)
  r_k^{\frac{tu}{s}}=\infty\bigr\}\Bigr)=1,
\end{equation}
then $f_{\mu}(\underline{r})\geq t$.
\end{corollary}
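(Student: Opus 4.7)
The plan is to deduce the corollary by applying Theorem~\ref{theorem:generallowerboundenergy} with an explicit choice of $(b_k)_{k=1}^\infty$ determined by the Frostman constant of $\mu$. The key observation is that the sets $\widehat A_k^u$ introduced in the corollary can be embedded into the sets $A_k^t$ from Theorem~\ref{theorem:generallowerboundenergy} once $(b_k)$ is calibrated correctly via the energy bound in Lemma~\ref{lemma:energyupperboundfrostmanmeasures}.

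More precisely, I would start by recalling Lemma~\ref{lemma:energyupperboundfrostmanmeasures}, which yields the inequality
\[
I_t(\mu_{B(x,r_k)})\le\frac{C^{t/s}s}{s-t}\mu(B(x,r_k))^{-t/s}
\]
for every $x\in\spt\mu$ and every $k\in\N$. Setting
\[
b_k\coloneqq\frac{s-t}{C^{t/s}s}r_k^{tu/s},
\]
we observe that $(b_k)_{k=1}^\infty$ is bounded (in fact, tends to zero as $r_k\to 0$, since $tu/s>0$), and that whenever $x\in\widehat A_k^u$ so that $\mu(B(x,r_k))\ge r_k^u$, the above estimate gives
\[
I_t(\mu_{B(x,r_k)})\le\frac{C^{t/s}s}{s-t}r_k^{-tu/s}=b_k^{-1}.
\]
Therefore $\widehat A_k^u\subseteq A_k^t$ for every $k\in\N$, where $A_k^t$ is defined as in Theorem~\ref{theorem:generallowerboundenergy}.

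From this inclusion it follows that $\chi_{A_k^t}(x)\ge\chi_{\widehat A_k^u}(x)$ pointwise, and hence
\[
\sum_{k=1}^\infty\chi_{A_k^t}(x)b_k\ge\frac{s-t}{C^{t/s}s}\sum_{k=1}^\infty\chi_{\widehat A_k^u}(x)r_k^{tu/s}.
\]
By the hypothesis \eqref{equation:measureassumptionforlowerbound}, the right-hand side is infinite for $\mu$-almost every $x\in\spt\mu$, so assumption \eqref{equation:energyassumptionforlowerbound} of Theorem~\ref{theorem:generallowerboundenergy} is verified with this particular choice of $(b_k)_{k=1}^\infty$. Invoking that theorem yields $f_\mu(\underline r)\ge t$, as required. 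There is no real obstacle here: the entire argument is a direct translation of the measure-theoretic assumption into the energy-theoretic one via the Frostman energy bound, and the only point that needs to be checked along the way is the boundedness of $(b_k)$, which is automatic since $r_k\to 0$.
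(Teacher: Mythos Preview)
Your proposal is correct and follows essentially the same route as the paper: both apply Lemma~\ref{lemma:energyupperboundfrostmanmeasures} to bound $I_t(\mu_{B(x,r_k)})$ for $x\in\widehat A_k^{u}$, take $b_k=\frac{s-t}{C^{t/s}s}\,r_k^{tu/s}$, and verify the hypotheses of Theorem~\ref{theorem:generallowerboundenergy}. Your write-up is slightly more explicit about the inclusion $\widehat A_k^{u}\subseteq A_k^{t}$ and the boundedness of $(b_k)$, but there is no substantive difference.
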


\begin{proof}
For $x\in\widehat A_k^u$,
Lemma~\ref{lemma:energyupperboundfrostmanmeasures} yields
\[
I_{t}(\mu_{B(x,r_k)})\leq\frac{C^{\frac ts}s}{s-t}\mu(B(x,r_k))^{-\frac ts}
\leq \frac{C^{\frac ts}s}{s-t}r_k^{-\frac{tu}{s}},
\]
hence the assumptions of Theorem~\ref{theorem:generallowerboundenergy} are
satisfied with
\[
b_k=\biggl(\frac{C^{\frac ts}s}{s-t}\biggr)^{-1}r_k^{\frac{tu}{s}}.
\]
Thus $f_{\mu}(\underline{r})\geq t$.
  \end{proof}

\section{Proof of Theorem \ref{theorem:nminusalpha}}\label{proofofEP}

To prove Theorem \ref{theorem:nminusalpha}, we need the following lemma.

\begin{lemma}\label{lemma:pointwisediv}
Let $\mu\in \mc{P}(\R^d)$, $\alpha>0$ and let
$s\geq \overline{\dim}_{\mathrm{H}}\,\mu$. Write $r_k:=k^{-\alpha}$ for
all $k\in\N$. Then for every $\varepsilon>0$ and
$t<\frac{1}{\alpha}$, we have that
\[
\mu\Bigl(\bigl\{x\in\spt\mu\mid\sum_{k=1}^{\infty}r_k^{t}
\chi_{\widehat A_k^{s+\varepsilon}}(x)=\infty\bigr\}\Bigr)=1.
\]
\end{lemma}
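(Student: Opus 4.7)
The plan is to turn the hypothesis $s\geq\dimhu\mu$ into its pointwise consequence and build the required divergence from a sequence of scales at which $\mu(B(x,\cdot))$ is nearly maximal. Concretely, for $\mu$-almost every $x\in\spt\mu$ we have $\dimlocl\mu(x)\leq s$, and along a suitable sequence of radii $\rho_n\downarrow 0$ the inequality $\mu(B(x,\rho_n))\geq\rho_n^{s+\varepsilon/2}$ will be transferred, via monotonicity of $r\mapsto\mu(B(x,r))$, to a whole geometric window of radii. The window will turn out to be large enough in the multiplicative sense that the number of indices $k$ with $r_k$ inside it, weighted by $r_k^{t}$, grows without bound.

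Fix $\varepsilon>0$ and $t<1/\alpha$, and note $\alpha t<1$. For $\mu$-almost every $x$, the condition $\liminf_{r\downarrow 0}\log\mu(B(x,r))/\log r\leq s$ yields a decreasing sequence $\rho_n=\rho_n(x)\downarrow 0$ with $\mu(B(x,\rho_n))\geq \rho_n^{s+\varepsilon/2}$. Setting $\beta:=(s+\varepsilon/2)/(s+\varepsilon)\in(0,1)$, for every $r\in[\rho_n,\rho_n^{\beta}]$ we get, by monotonicity,
\[
\mu(B(x,r))\geq \mu(B(x,\rho_n))\geq \rho_n^{s+\varepsilon/2}\geq r^{s+\varepsilon},
\]
so $x\in \widehat A_k^{s+\varepsilon}$ for every $k$ with $r_k=k^{-\alpha}\in[\rho_n,\rho_n^{\beta}]$, i.e., for every integer $k\in[\rho_n^{-\beta/\alpha},\rho_n^{-1/\alpha}]$.

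A routine integral comparison, using $\alpha t<1$, then gives
\[
\sum_{\rho_n^{-\beta/\alpha}\leq k\leq\rho_n^{-1/\alpha}} r_k^{t}
= \sum_{\rho_n^{-\beta/\alpha}\leq k\leq\rho_n^{-1/\alpha}} k^{-\alpha t}
\gtrsim \rho_n^{-(1-\alpha t)/\alpha},
\]
which tends to $\infty$ as $n\to\infty$. Passing to a subsequence of $(\rho_n)$ so that the index windows $[\rho_n^{-\beta/\alpha},\rho_n^{-1/\alpha}]$ are pairwise disjoint (possible since $\beta<1$) and summing the contributions, we obtain $\sum_{k=1}^{\infty}r_k^{t}\chi_{\widehat A_k^{s+\varepsilon}}(x)=\infty$ for $\mu$-almost every $x$. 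The only point requiring any care is the discrete-versus-continuous count of indices inside each window, but this is immediate from the multiplicative gap $\rho_n^{\beta-1}\to\infty$; I do not expect any genuine obstacle.
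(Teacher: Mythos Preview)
Your argument is correct and is essentially the same as the paper's: both exploit monotonicity of $r\mapsto\mu(B(x,r))$ to upgrade a single good scale (at a slightly sharper exponent) to a whole multiplicative window of good scales, and then count the $\gtrsim k^{1-\alpha t}$ indices $k$ in that window. The only cosmetic differences are that the paper parametrises by indices rather than radii and uses the pair $(s+\varepsilon,s+2\varepsilon)$ instead of your $(s+\varepsilon/2,s+\varepsilon)$; your final disjoint-windows step is harmless but unnecessary, since a single window already contributes an unbounded amount.
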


\begin{proof}
Since $s\geq \overline{\dim}_{\mathrm{H}}\,\mu$, we have that
$\dimlocl\mu(x)\leq s$ for $\mu$-almost every $x\in X$. Fix such a point $x$.
For every $\varepsilon>0$, define the set
\[
A^{s+\varepsilon}(x):=\{k\in \N\mid \mu(B(x,r_k))\geq r_k^{s+\varepsilon}\}.
\]
Since $r_k=k^{-\alpha}$, it is true that
\[
\dimlocl \mu(x)=\liminf_{k\to \infty}\frac{\log \mu(B(x,r_k))}{\log r_k}.
\]
Thus the number of elements in $A^{s+\varepsilon}(x)$ is infinite for
every $\varepsilon>0$. Observe now that if $k\in A^{s+\varepsilon}(x)$
and $l<k$ is such that $l\not\in A^{s+2\varepsilon}(x)$, then 
\[
k^{-\alpha(s+\varepsilon)}=r_k^{s+\varepsilon}\leq \mu(B(x,r_k))\leq \mu(B(x,r_l))
<r_{l}^{s+2\varepsilon}=l^{-\alpha(s+2\varepsilon)}.
\]
Hence $k>l^{\gamma}$, where
$\gamma\coloneqq 1+\frac{\varepsilon}{s+\varepsilon}>1$. Thus we find
arbitrarily large natural numbers $k$ such that
$\{\lceil k^{1/\gamma}\rceil,\lceil k^{1/\gamma}\rceil+1,\ldots,k\}\subset
A^{s+2\varepsilon}(x)$.
Thus, for any $t<\frac{1}{\alpha}$, 
\begin{align*}
\sum_{k=1}^{\infty}\chi_{\widehat A_k^{s+2\varepsilon}}(x)r_{k}^{t}
  &=\sum_{k\in A^{s+2\varepsilon}(x)}r_k^{t}
    \geq\liminf_{k\to\infty}\sum_{j=\lceil k^{1/\gamma}\rceil}^{k}r_{j}^{t}
    \geq\liminf_{k\to\infty}\sum_{j=\frac k2}^{k}r_{j}^{t}\\
  &\geq\liminf_{k\to\infty}\frac{k}{2} k^{-\alpha t}=\infty,
\end{align*}
since $1-\alpha t>0$. Since $\varepsilon>0$ was arbitrary, the claim follows. 
\end{proof}

\begin{proof}[Proof of Theorem \ref{theorem:nminusalpha}]
Assume first that $\frac 1\alpha<\overline{\dim}_{\mathrm{H}}\,\mu$.
Fix $0<t<\frac{1}{\alpha}$. Our aim is to show that $f_{\mu}(\alpha)\geq t$.
Write $s:=\overline{\dim}_{\mathrm{H}}\,\mu$ and choose $\varepsilon>0$ small
enough so that $\frac{1}{\alpha}<s-\varepsilon$ and
\begin{equation}\label{equation:smallenoughepsilon}
t\,\frac{s+\varepsilon}{s-\varepsilon}<\frac{1}{\alpha}.
\end{equation}
By definition of $s$, we can choose $C>0$ such that $\mu(D)>0$, where 
\[
D:=\{x\in\spt\mu\mid \mu(B(x,r))\leq Cr^{s-\varepsilon}
\text{ for all } r>0\}.
\]
Applying Lemma~\ref{lemma:ekstromperssonboundeddensities} with
$\mu=\mu_D$ and $\nu=\mu$, we conclude that it suffices to show that 
$f_{\mu_D}(\alpha)\geq t$. Observe that there exists $C'>0$ such that
\begin{equation}\label{muD}
\mu_D(B(x,r))\leq C'r^{s-\varepsilon}
\end{equation}
for every $x\in\R^d$ and every $r>0$. By Lebesgue's density theorem, we have
that 
\[
\underline{\dim}_{\mathrm{loc}}\mu_{D}(x)=\underline{\dim}_{\mathrm{loc}}\mu(x)
\]
for $\mu_{D}$-almost every $x\in\R^d$ and, thus,
$\overline{\dim}_{\mathrm{H}}\,\mu_{D}\leq s$. 
Using \eqref{equation:smallenoughepsilon} and
Lemma~\ref{lemma:pointwisediv} for $\mu_D$, we conclude that
\[
\sum_{k=1}^\infty r_k^{ t\,\frac{s+\varepsilon}{s-\varepsilon}}\chi_{\widehat A_k^{s+\vep}}(x)
=\infty
\]  
for $\mu_{D}$-almost every $x\in\R^d$. Applying
Corollary~\ref{corollary:generallowerboundmeasure} for $\mu_D$ with $u=s+\vep$
and $s$ replaced by $s-\vep$ (see \eqref{muD}) yields
\[
f_{\mu_{D}}(\alpha)\geq t.
\]

The claim is true also in the case
$\frac 1\alpha=\overline{\dim}_{\mathrm{H}}\,\mu$ since
$E_\beta(\omega)\subset E_\alpha(\omega)$ for $\frac 1\beta<\frac 1\alpha$ and
$f_\mu(\alpha)\le\frac 1\alpha$. 
\end{proof}

\section{Proof of Theorem \ref{theorem:lowerboundforgeneralballs}}
 \label{proofofgeneral}

\begin{proof}[Proof of Theorem~\ref{theorem:lowerboundforgeneralballs}]
Let $0<\varepsilon<\overline\delta$. By the definition of
$\overline{\delta}$, there exists a Borel set $H_0\subset\spt\mu$
such that $\mu(H_0)>0$ and, for every $x\in H_0$, 
\[
\frac{\dimlocl \mu(x)}{\dimlocu \mu(x)}> \overline{\delta}-\varepsilon
\text{ and } \dimlocl \mu(x)> s_2(\underline r).
\]
For every small enough $\gamma>0$ there exists a Borel set $H_1\subseteq H_0$
with $\mu(H_1)>0$ such that
$\dimlocl \mu(x)>s_2(\underline r)+\gamma$ for every $x\in H_1$.
Further, for some $s\geq s_2(\underline r)$, we find a Borel set
$H_2\subseteq H_1$ with $\mu(H_2)>0$ such that 
$\dimlocl \mu(x)\in [s+\gamma,s+2\gamma]$. Then, for every $x\in H_2$ we have
that
\[
s+\gamma\leq \dimlocl \mu(x)\leq \dimlocu \mu(x)
\leq \frac{\dimlocl \mu(x)}{\overline{\delta}-\varepsilon}
\leq \frac{s+2\gamma}{\overline{\delta}-\varepsilon}.
\]
Finally, there exists a Borel set $H_3\subseteq H_2$ with positive $\mu$-measure
such that
\[
r^{\frac{s+3\gamma}{\overline{\delta}-\varepsilon}}\leq \mu(B(x,r))\leq r^{s+\frac{\gamma}{2}}
\]
for every $x\in H_3$ and for all sufficiently small $r>0$. Thus
for some constant $C=C(\mu,\gamma,H_3)$, we have that 
\[
\mu_{H_3}(B(x,r))\leq Cr^{s+\frac{\gamma}2}
\]
for every $x\in\R^d$ and $r>0$.
    
We will now apply Corollary~\ref{corollary:generallowerboundmeasure} to
$\mu_{H_3}$ with $u=\frac{s+4\gamma}{\overline\delta-\vep}$ and
$s$ replaced by $s+\frac{\gamma}2$. By Lebesgue's density theorem,
\[
\lim_{r\to 0}\frac{\mu(H_3\cap B(x,r))}{\mu(B(x,r))}=1
\]
for $\mu$-almost every $x\in H_3$, hence $\mu_{H_3}$-almost every $x$ belongs to
$\widehat A_k^u$ (see \eqref{Aku}) for every $k$ large enough
(depending on $x$). Now observe that if
$t<s_2(\underline r)(\overline{\delta}-\varepsilon)
\frac{s+\frac{\gamma}{2}}{s+4\gamma}$, then
\[
t\frac{s+4\gamma}{(\overline{\delta}-\varepsilon)(s+\frac{\gamma}{2})}
<s_2(\underline r).
\]
Recalling \eqref{s2}, we have that
\[
\sum_{k=1}^\infty\chi_{\widehat A_k^u}(x)
r_k^{t \frac{s+4\gamma}{\left(\overline{\delta}-\varepsilon\right)(s+\frac{\gamma}{2})}}=\infty
\]
for $\mu_{H_3}$-almost every $x\in\R^d$. By
Corollary~\ref{corollary:generallowerboundmeasure},
$f_{\mu_{H_3}}(\underline{r})\geq t$.
Lemma~\ref{lemma:ekstromperssonboundeddensities} now yields
$f_{\mu}(\underline{r})\geq t$. From this we deduce that
$f_{\mu}(\underline{r})\geq s_2(\underline r)(\overline{\delta}-\varepsilon)
\frac{s+\frac{\gamma}{2}}{s+4\gamma}$ and letting $\gamma \to 0$ and
$\varepsilon \to 0$ completes the proof.
\end{proof}

\section{Sharpness of the bounds in Theorem
  \ref{theorem:lowerboundforgeneralballs}}\label{examples}

In this section, we provide examples which demonstrate that the bounds in
Theorem~\ref{theorem:lowerboundforgeneralballs} are sharp. In
particular, these examples show that there is no formula for
$f_{\mu}(\underline{r})$ involving only the quantity $s_2(\underline r)$ and the
local dimensions of the measure. For simplicity, we do
our constructions in $\R$ but similar examples can be constructed also in
$\R^d$.

We start with an example showing that it is possible that
$f_{\mu}(\underline{r})=s_2(\underline{r})$ for every sequence $\underline{r}$
with $s_2(\underline{r})<\dimhu\mu$, no matter how small the quantity
$\overline{\delta}$ is. In particular, this example shows that for some
measures, the trivial upper bound in
Theorem~\ref{theorem:lowerboundforgeneralballs} is the correct value for the
almost sure dimension for all sequences of radii.

\begin{example}\label{example:dimensionalwayss_2}
Let $0<s<u<1$ and let $0<\alpha,\beta<\frac{1}{2}$ be such that
$s=\frac{\log 2}{-\log \alpha}$ and $u=\frac{\log 2}{-\log \beta}$. Let
$(N_k)_{k=1}^\infty$ be a rapidly growing sequence of integers and let
$C:=\bigcap_{n=0}^{\infty}C_n$ be the Cantor set, where $C_{n}$ is obtained from
$C_{n-1}$ by removing the middle $(1-2\alpha)$-interval from each
construction interval of $C_{n-1}$ for $N_{2k}\leq n< N_{2k+1}$ and by removing
the middle $(1-2\beta)$-interval from each construction interval of
$C_{n-1}$ for $N_{2k+1}\leq n<N_{2k+2}$. Let $\mu$ be the natural 
Borel probability measure on $C$ giving equal weight to all construction
intervals at same level. We then have that 
\begin{equation}\label{equation:estimatesforcantormeasure}
r^{u}\lesssim \mu(B(x,r)) \lesssim r^{s}    
\end{equation}
for all $x\in C$ and $0<r<1$, where the notation $g(r)\lesssim h(r)$
 means that there exists a constant $C$ such that $g(r)\le Ch(r)$. If the
sequence $(N_k)_{k=1}^\infty$ grows fast enough, then $\dimlocl \mu(x)=s$ and
$\dimlocu \mu(x)=u$ for all $x\in C$.

Our aim is to show that $f_{\mu}(\underline{r})=s_2(\underline{r})$
for any sequence $\underline{r}$ with $s_2(\underline{r})<s$. To this end, let
$x_0\in C$ and $0<r_0<1$. Let $I_0$ be the largest construction
interval contained in $C\cap B(x_0,r_0)$. Let $n_0$ be the level of $I_0$. Then
$\mu(I_0)\approx\mu(B(x_0,r_0))$, $|I_0|\approx r_0$ and
\begin{equation}\label{ItI0}
I_t(\mu_{B(x_0,r_0)})\lesssim I_t(\mu_{I_0})
\end{equation}
for all $t>0$. Here $g\approx h$ means that $g\lesssim h$ and $h\lesssim g$.
Let $0<t<s$ and fix $x\in I_0$. Let $N>n_0$ be large and for
$j\in \{ n_0,\ldots ,N-1\}$, set 
\[
D_j(x):=\{y\in C\mid C_j(y)=C_j(x)\text{ and }C_{j+1}(y)\cap C_{j+1}(x)
=\emptyset\},
\]
where $C_j(y)$ denotes the unique construction interval of $C_j$ containing $y$.
Note that $D_j(x)$ is a construction interval of $C_{j+1}$ and 
$I_0=C_N(x)\cup\bigcup_{j=n_0}^{N-1}D_j(x)$ with the union disjoint.
Let $\gamma_{j}$ denote the length of the intervals removed from $C_{j-1}$ in the
construction to obtain $C_j$. Since $\alpha<\beta$, we have that 
\begin{equation}\label{equation:lengthofgapincantorset}
\dist(x,D_j(x))\geq \gamma_{j+1}\geq (1-2\beta)\alpha^{j-n_0}|I_0|
\end{equation}
for $j\in \{ n_0,\ldots,N-1\}$. Finally, let $\ell_n$ denote the length of the
construction intervals of $C_n$. By
Lemma~\ref{lemma:energyupperboundfrostmanmeasures},
\eqref{equation:lengthofgapincantorset} and
\eqref{equation:estimatesforcantormeasure}, we obtain
\begin{align*}
\phi_{\mu_{|_{I_0}}}^{t}(x)&=\int_{I_0}|x-y|^{-t}\,d\mu(y)=\int_{C_N(x)}|x-y|^{-t}\,
    d\mu(y)+\sum_{j=n_0}^{N-1}\int_{D_j(x)}|x-y|^{-t}\,d\mu(y)\\
 &\lesssim |C_N(x)|^{s-t}+\sum_{j=n_0}^{N-1}\mu(D_j(x))\gamma_{j+1}^{-t}
    = \ell_N^{s-t}+\sum_{j=n_0}^{N-1}2^{n_0-j-1}\mu(I_0)\gamma_{j+1}^{-t}\\
 &\leq \ell_N^{s-t}+\mu(I_0)(1-2\beta)^{-t}|I_0|^{-t}\sum_{j=n_0}^{N-1}2^{n_0-j-1}
    \alpha^{t(n_0-j)}\\
 &\le \ell_N^{s-t}+\frac 12\mu(I_0)(1-2\beta)^{-t}|I_0|^{-t}\sum_{j=0}^{\infty}\Bigl(
    \frac{\alpha^{-t}}{2}\Bigr)^{j}.
\end{align*}
Since $t<s=\frac{\log 2}{-\log \alpha},$ it is true that $\alpha^{-t}<2$. Hence,
the series above converges. Integrating over $I_0$ and normalising, we obtain
for some $C_1=C_{\alpha,\beta,t}$ that
\[
I_t(\mu_{I_0})\le C_1\bigl(\mu(I_0)^{-1}\ell_N^{s-t}+|I_0|^{-t}\bigr).
\]
Letting $N\to \infty$ yields $I_t(\mu_{I_0})\le C_1|I_0|^{-t}$.
Recalling \eqref{ItI0}, we obtain the estimate 
$I_t(\mu_{B(x_0,r_0)})\lesssim |I_0|^{-t} \approx r_0^{-t}$.
If $\underline{r}$ is a sequence of positive numbers such that
$s_2(\underline{r})<s$ and $0<t<s_2(\underline{r})$, then there
exists a constant $C_2$ such that $I_t(\mu_{B(x,r_k)})\le C_2r_k^{-t}$ for all
$x\in C$ and $k\in \N$, hence Theorem~\ref{theorem:generallowerboundenergy}
implies that $f_{\mu}(\underline{r})\geq t$. Letting $t\to s_2(\underline{r})$
through a countable sequence then yields the claim
$f_{\mu}(\underline{r})=s_2(\underline{r})$.
\end{example}

In the next example, we construct a measure $\mu$ such that the
quantity $\overline{\delta}$ can be made arbitrarily small, and for any
$\gamma\in [\overline{\delta},1]$, there exists a sequence $\underline{r}$ of
radii such that $f_{\mu}(\underline{r})=s_2(\underline{r})\gamma$.
In particular, this example shows that the lower bound in
Theorem~\ref{theorem:lowerboundforgeneralballs} can be attained.

\begin{example}\label{example:lowerboundmaybesharp}
Let $0<s<u<1$. We will first construct a measure $\mu \in \mc{P}([0,1])$ such
that $\dimlocl \mu(x)=s$ and $\dimlocu\mu(x)=u$ for every $x\in\spt\mu$.
Fix some $\ell_{0}\in ]0,1[$ and define a sequence
$(\ell_k)_{k=0}^\infty$ of positive numbers by setting $\ell_{k+1}:=\ell_{k}^{v}$,
where 
\[
v\coloneqq \frac{u(1-s)}{s(1-u)}.
\]
For every $k\in\N\setminus\{0\}$, let
$L_k\coloneqq\ell_{k}^{\frac{s}{u}}=\ell_{k-1}^{\frac{1-s}{1-u}}$. Note that
$\ell_{k}<L_k<\ell_{k-1}$ for every $k\in\N\setminus\{0\}$ and
$(l_k)_{k=0}^\infty$ tends to zero with super exponential speed. Let
$C_0=[0,\ell_0]$. Construct $C_1$ by partitioning $C_0$ into 
\[
N_1\coloneqq\left\lfloor \frac{\ell_{0}}{L_1}\right\rfloor
\]
intervals of length $L_1$, and from each of these intervals, keep the leftmost
segment of length $\ell_{1}$ and discard the rest to obtain $C_1$. If $C_k$ has
been constructed and $C_k$ is a disjoint union of $\prod_{j=1}^{k}N_j$ intervals
of length $\ell_{k}$, construct $C_{k+1}$ by dividing each interval of $C_k$ into
$N_{k+1}\coloneqq \lfloor\frac{\ell_{k}}{L_{k+1}}\rfloor$ intervals of length
$L_{k+1}$ and from each of these intervals  keeping the leftmost segment of
length $\ell_{k+1}$. This way we obtain a decreasing sequence $(C_k)_{k=0}^\infty$
of compact sets with the properties that each $C_k$ is a union of
$\prod_{j=1}^{k}N_j$ intervals of length $\ell_{k}$ and these intervals are
at least $(L_k-\ell_k)$-separated. Note that
$L_k-\ell_k>\frac{L_k}{2}$ for all large $k\in\N$. Set
$C:=\bigcap_{k=0}^{\infty}C_k$. Let $\mu$ be the natural uniformly distributed
Borel probability measure on $C$. Since $(l_k)_{k=0}^\infty$ tends to zero fast,
$(N_k)_{k=1}^\infty$ tends to infinity fast. Therefore, the facts  
$N_k=\lfloor\frac{\ell_{k-1}}{L_{k}}\rfloor$ and
\[
\frac{\ell_{k-1}}{L_k}\ell_{k}^{s}=\ell_{k-1}^{1+vs(1-\frac{1}{u})}=\ell_{k-1}^{s}
\]
imply that $\mu(B(x,r))\lesssim r^{s}$ for all $x\in\R$ and $r>0$. Furthermore,
for all $x\in C$, we have that 
\begin{equation}\label{extremescales}
\mu(B(x,\tfrac{L_k}2))=\mu(B(x,\ell_{k}))\approx \ell_{k}^{s}=L_k^{u}
\end{equation}  
for all $k\in\N$, and $\mu(B(x,r))\gtrsim r^{u}$ for
all $0<r<1$. Thus, $\dimlocl \mu(x)=s$ and $\dimlocu \mu(x)=u$ for
all $x\in C$ and, in particular, $\overline{\delta}=\frac{s}{u}$,
where $\overline{\delta}$ is the quantity defined in
Theorem~\ref{theorem:lowerboundforgeneralballs}.

In the following, we will demonstrate that by choosing a suitable sequence $\underline{r}$, the almost sure dimension $f_{\mu}(\underline{r})$ can take any value in the interval $[s_2(\underline{r})\frac{s}{u},s_2(\underline{r})]$, that is, the possible pairs $(s_2(\underline{r}),f_{\mu}(\underline{r}))$ fill the entire triangle depicted in Figure \ref{figure:possiblevaluesfordimh}.
\begin{figure}[h!]
\begin{center}
        \begin{tikzpicture}[line cap=round, line join=round, >=triangle 45,
                            x=1.0cm, y=1.0cm, scale=1]

          \draw [->,color=black] (-0.1,0) -- (10,0);
          \draw [->,color=black] (0,-0.1) -- (0,4.5);
          \draw (0,0) -- (3,3);
          \draw (0,0) -- (8,3);
          \draw (3,3) -- (8,3);
          
          \draw (0,4.3) node [left] {$f_{\mu}(\underline{r})$};
          \draw (10,0) node [below] {$s_2(\underline{r})$};

          \draw[shift={(3,0)}] (0pt, 2pt) -- (0pt,-2pt) node [below] {$\dim_{\mathrm{H}}\mu$};
          \draw[shift={(8,0)}] (0pt, 2pt) -- (0pt,-2pt) node [below] {$\frac{\dim_{\mathrm{H}}\mu}{\overline{\delta}}$};
         
          \draw[shift={(0,3)}] (2pt, 0pt) -- (-2pt,0pt) node [left] {$\dim_{\mathrm{H}}\mu$};
          
          \draw[dotted] (3,0) -- (3,3);
          \draw[dotted] (0,3) -- (3, 3);
          \draw[dotted] (8,0) -- (8,3);
          
        \end{tikzpicture}
\end{center}
\caption{The almost sure dimension $f_{\mu}(\underline{r})$ depicted as a
  "function" of $s_2(\underline{r})$.}
\label{figure:possiblevaluesfordimh}
\end{figure}
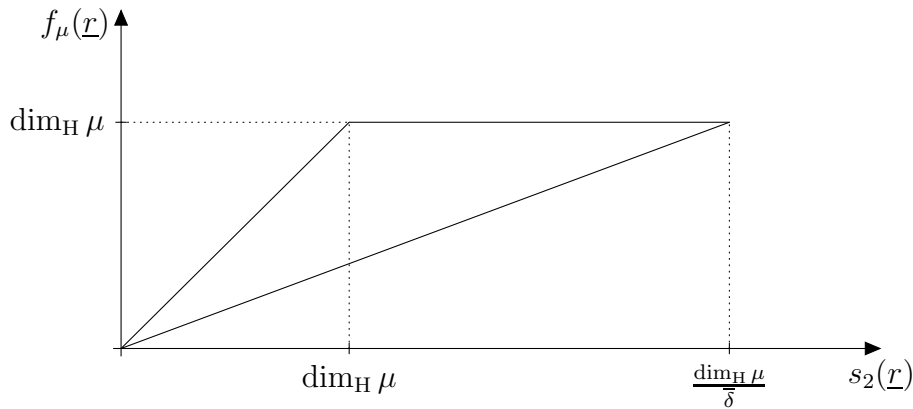
Let $\gamma\in [\frac{s}{u},1]$ and let
$s_0\in\mathopen]0,\frac{s}{\gamma}\mathclose]$.
We will show that there exists a sequence $\underline{r}$ such that
$s_2(\underline{r})=s_0$ and $f_{\mu}(\underline{r})=s_0\gamma$.
Varying $\gamma$ and $s_0$ through their allowed ranges fills the rectangle in
Figure~\ref{figure:possiblevaluesfordimh}.

Define a sequence $(M_j)_{j=1}^{\infty}$ of integers by setting 
\[
M_j:=\left\lfloor \left( \frac{1}{\ell_j^{\gamma}}\right)^{s_0}\right\rfloor
\]
for every $j\in\N\setminus\{0\}$. Set $M_0:=0$.
Consider the sequence $\underline{r}$, where $r_k=\frac{\ell_{j}^{\gamma}}{2}$
for all $k=M_{j-1}+1,\ldots,M_j$. Then $s_2(\underline{r})=s_0$, since 
\[
\sum_{n=1}^\infty r_{n}^{t}=\sum_{j=1}^\infty\sum_{k=M_{j-1}+1}^{M_j}r_k^{t}\approx
\sum_{j=1}^\infty\ell_j^{\gamma(t-s_0)},
\]
and this sum converges when $t>s_0$.
Observe that since $\ell_j^{\gamma}\le\ell_j^{s/u}=L_j$ and the
construction intervals of $C_j$ are at least
$\frac{L_j}{2}$-separated, we have for any $x\in C$ and $M_{j-1}<k\leq M_j$ that
\begin{equation}\label{equation:largerballissmallerball}
C\cap B(x,\tfrac{\ell_j}2)\subset C\cap B(x,r_k)\subset C\cap B(x,\ell_j)
=C\cap B(x,(2r_k)^{\frac 1\gamma}). 
\end{equation}
Thus, by denoting $\rho_k:=(2r_{k})^{\frac 1\gamma}$, we have that 
\begin{equation}\label{equation:upperboundinexample}
  f_{\mu}(\underline{r})\leq s_2(\underline{\rho})=s_2(\underline{r})\gamma
  =s_0\gamma.
\end{equation}
Next we will show that $s_0\gamma\leq f_{\mu}(\underline{r})$. To this end, let
$0<t<s_0\gamma$. Recall that $s_0\gamma\leq s$ and that, for some constant
$D>0$, $\mu(B(x,r))\leq Dr^{s}$ for all $x\in C$ and $r>0$.
In \eqref{equation:largerballissmallerball} we saw that, for all
$x\in C$, the ball $B(x,(2r_k)^{\frac 1\gamma})$ contains exactly one construction
interval $I$ of $C_j$ and $\mu(B(x,r_k))\ge\frac 12\mu(I)$. Therefore,  
\[
I_{t}(\mu_{B(x,r_k)})\lesssim I_t\bigl(\mu_{B(x,(2r_k)^{\frac 1\gamma})}\bigr).
\]
Since
$\mu(B(x,(2r_{k})^{\frac 1\gamma}))\approx l_j^s=
\bigl((2r_{k})^{\frac 1\gamma}\bigr)^{s}$,
Lemma~\ref{lemma:energyupperboundfrostmanmeasures} yields that
\[
I_t\bigl(\mu_{B(x,(2r_k)^{\frac 1\gamma})}\bigr)\lesssim r_{k}^{-\frac t\gamma}.
\]
Note that 
\[
\sum_{k=1}^{\infty}r_k^{\frac t\gamma}=\infty
\]
since $t<s_0\gamma$.
Thus, by Theorem~\ref{theorem:generallowerboundenergy}, we have that 
\[
f_{\mu}(\underline{r})\geq t.
\]
Letting $t\nearrow s_0\gamma$ through a countable sequence yields the desired
lower bound. 
\end{example}

\end{document}